\theoremstyle{plain}
\newtheorem{thm}{Theorem}[section]
\newtheorem{lem}[thm]{Lemma}
\newtheorem{prop}[thm]{Proposition}
\newtheorem{cor}[thm]{Corollary}
\newtheorem{conj}[thm]{Conjecture}
\theoremstyle{definition}
 \newcommand{\nc}{\newcommand}
 \nc{\Z}{{\mathbb Z}}
 \nc{\R}{{\mathbb R}}
 \nc{\N}{{\mathbb N}}
 \nc{\ZN}{{{\mathbb N}_0}}
 \nc{\Q}{{\mathbb Q}}
 \nc{\CC}{{\mathbb C}}
 \nc{\calP}{{\mathcal P}}
 \nc{\gam}{{\gamma}}
 \nc{\gG}{{\Gamma}}
 \nc{\om}{{\omega}}
 \nc{\vep}{{\varepsilon}}
 \nc{\ga}{{\alpha}}
 \nc{\gl}{{\lambda}}
 \nc{\gb}{{\beta}}
 \nc{\gd}{{\delta}}
 \nc{\bfs}{{\bf s}}
 \nc{\gs}{{\sigma}}
 \nc{\gth}{{\theta}}
 \nc{\gS}{{\Sigma}}
 \nc{\gk}{{\kappa}}
  \nc{\gz}{{\zeta}}
 \nc{\tgz}{{\tilde{\zeta}}}
 \nc{\gO}{{\Omega}}
 \nc{\sif}{{\mathcal S}}
 \nc{\gt}{{\tau}}
 \nc{\Lra}{\Longrightarrow}
 \nc{\lra}{\longrightarrow}
 \nc{\lmaps}{\longmapsto}
 \nc{\fS}{{\mathfrak S}}
 \nc{\DD}{{\mathfrak D}}
 \nc{\Llra}{\Longleftrightarrow}
 \nc{\ol}{\overline}
 \nc{\ola}{\overleftarrow}
 \nc{\lms}{\longmapsto}
 \nc{\cv}{{{\mathsf c}{\mathsf v}}}
 \nc{\zq}{{\zeta_q}}
 \nc\qup{{q\uparrow 1}}
 \nc{\us}{\underset}
 \nc{\tn}{{\tilde{n}}}
 \nc{\gD}{{\Delta}}
 \nc{\bi}{{\bf i}}
 \nc{\bfone}{{\bf 1}}
\begin{document}
\title{A Family of Supercongruences Involving Multiple Harmonic Sums}

\author{Megan McCoy$^*$,
Kevin Thielen$^*$,
Liuquan Wang$^{\dagger}$,
and Jianqiang Zhao$^{\star}$}
\date{}

\email{wanglq@whu.edu.cn}
\email{zhaoj@ihes.fr}

\address{$^*$Department of Mathematics, Eckerd College, St.\ Petersburg, FL 33711, USA}
\address{$^{\dagger}$Department of Mathematics, National University of Singapore, Singapore, 119076, Singapore}
\address{$^\star$Department of Mathematics, The Bishop's School, San Diego, CA 92037}

\subjclass[2010]{11A07, 11B68}

\keywords{Multiple harmonic sums, Bernoulli numbers, Supercongruences}

\maketitle
\allowdisplaybreaks

\begin{abstract}
In recent years, the congruence
$$
\sum_{\substack{i+j+k=p\\ i,j,k>0}} \frac1{ijk} \equiv -2 B_{p-3} \pmod{p},
$$
first discovered by the last author have been generalized by either
increasing the number of indices and considering the corresponding
supercongruences, or by considering the alternating version of multiple harmonic sums.
In this paper, we prove a family of similar supercongruences modulo prime powers $p^r$
with the indexes summing up to $mp^r$ where $m$ is coprime to $p$, where
all the indexes are also coprime to $p$.
\end{abstract}

\section{Introduction}
Multiple harmonic sums are multiple variable generalization of harmonic numbers. Let $\N$
be the set of natural numbers. For $\bfs=(s_1,\dots, s_d)\in \N^d$ and any $N\in\N$,
we define the multiple harmonic sums (MHS) by
\begin{equation*}
 H_N(\bfs):=\sum_{N\ge k_1>\dots>k_d>0} \prod_{i=1}^d \frac{1}{k_i^{s_i}}.
\end{equation*}
Since mid 1980s these sums have appeared in a few diverse areas of mathematics
as well as theoretical physics such as multiple zeta values \cite{HessamiPilehrood2Ta2013,Hoffman2005,KanekoZa2013},
Feynman integrals \cite{Blumlein1999,DevotoDu1984}, quantum electrodynamics
and quantum chromodynamics \cite{Blumlein2005,Vermaseren1999}.

In \cite{Zhao2008a} the last author started to investigate congruence properties of MHSs,
which were also considered by Hoffman \cite{Hoffman2005} independently.
As a byproduct, the following
intriguing congruence was noticed: for all primes $p\ge 3$
\begin{equation}\label{equ:BaseCongruence}
  \sum_{\substack{i+j+k=p\\ i,j,k>0}} \frac1{ijk} \equiv -2 B_{p-3} \pmod{p},
\end{equation}
where $B_k$ are Bernoulli numbers defined by the generating series
$$\frac{t}{e^t-1}=\sum_{k=0}^\infty B_k\frac{t^k}{k!}.$$
This was proved by the last author using MHSs in \cite{Zhao2007b}, and by Ji using
some combinatorial identities in \cite{Ji}. Later on, a few generalizations and analogs
were obtained by either increasing the number of indices and considering the corresponding
supercongruences (see \cite{WangCa2014,Wang2014b,XiaCa2010,ZhouCa2007}), or considering the alternating
version of MHSs (see \cite{ShenCai2012b,Wang2014a}).

Let $\mathcal{P}_{n}$ be the set of positive integers not divisible by $n$.
To generalize the congruence in \eqref{equ:BaseCongruence}, we wonder if
for every \emph{odd} integer $d\ge 3$
there exists a rational number $q_d$ such that
\begin{equation}\label{equ:CongruenceDepthd}
\sum_{\substack{l_1+l_2+\dots+l_d=p^r \\ l_1,\dots,l_d\in \calP_p}}
    \frac{1}{l_1 l_2 \dots l_d} \equiv q_d \cdot p^{r-1} B_{p-d} \pmod{p^r}
\end{equation}
for any prime $p>d$ and integer $r\ge 2$.
In \cite{WangCa2014,Wang2014b} it is shown that $q_3=-2$ and $q_5=-5!/6$.
We should point it out that when $d$ is even, the congruence pattern
is quite different, see \cite{Wang2015,Zhao2014}.
In this paper, we shall prove the following main result when $d=7$.
\begin{thm}\label{thm:main}
Let $r$ and $m$ be positive integers and $p>7$ be a prime such that $p\nmid m$.
\begin{enumerate}
  \item[\upshape{(i)}] If $r=1$, then
\begin{equation*}
\sum_{\substack{l_1+l_2+\dots+l_7=mp \\ l_1,\dots,l_7\in \calP_p}}
    \frac{1}{l_1 l_2\dots l_7} \equiv -(504m+210m^3+6m^5) B_{p-7} \pmod{p}.
\end{equation*}

  \item[\upshape{(ii)}] If $r \ge 2$, then
\begin{equation*}
\sum_{\substack{l_1+l_2+\dots+l_7=mp^r\\  l_1,\dots,l_7\in \calP_p}}
    \frac{1}{l_1 l_2\dots l_7} \equiv -\frac{7!}{10}\cdot m p^{r-1} B_{p-7} \pmod{p^r}.
\end{equation*}
\end{enumerate}
\end{thm}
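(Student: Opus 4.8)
The plan is to reduce everything to a handful of power-sum congruences in the residues modulo $p$ (for part (i)) or modulo $p^r$ (for part (ii)). The starting point is the symmetric-function expansion: the sum $\sum_{l_1+\dots+l_7=N,\ l_i\in\calP_p}1/(l_1\cdots l_7)$ can be rewritten, after clearing denominators against $N$, in terms of the power sums $P_s(N):=\sum_{0<l<N,\ p\nmid l}l^s$ and more refined ``constrained'' power sums. Concretely, multiplying through and using $l_7=N-l_1-\dots-l_6$ repeatedly, one writes the left-hand side as a polynomial (with rational coefficients independent of $p$) in the quantities $A_j := \sum_{l\in\calP_p,\,0<l<N} 1/l^j$ for small $j$, together with $N$. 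This is the standard device (used in \cite{Zhao2007b,WangCa2014,Wang2014b}) of turning a restricted additive convolution into a ``stuffle''-type product of one-variable sums.

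Next I would invoke the known evaluations of $A_j \pmod{p^r}$. For $N=mp^r$ with $p\nmid m$, Bernoulli-number congruences of Glaisher/Wolstenholme type give, for $j$ in the relevant range, expressions of the form $A_j \equiv (\text{explicit polynomial in }m)\cdot p^{r-1}B_{p-\cdot} + (\text{lower-weight terms}) \pmod{p^r}$; the weight bookkeeping is arranged so that only $B_{p-7}$ survives modulo the appropriate power of $p$. In part (ii), each factor $1/l_i$ with $l_i\in\calP_p$ contributes, and the key point is that the total sum telescopes so that all terms of $p$-adic valuation too low to matter cancel, leaving a single clean multiple of $mp^{r-1}B_{p-7}$; in part (i) ($r=1$) there is one extra layer because $p^{r-1}=1$, so the ``lower-weight'' contributions no longer vanish and one must keep track of the full polynomial $504m+210m^3+6m^5$, whose odd-degree-in-$m$ shape reflects the parity symmetry $l_i\mapsto N-\text{(partial sums)}$ of the index set $\calP_p$.

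The main obstacle, and the heart of the computation, is the combinatorial identity that expresses the depth-$7$ restricted sum in terms of the one-variable sums $A_j$ with the correct rational coefficients, and then showing that after substituting the Bernoulli congruences every contribution except the $B_{p-7}$ term either vanishes mod $p^r$ or cancels in pairs. Depth $7$ makes this markedly more involved than the $d=3,5$ cases: the number of set-partition/composition types to handle grows quickly, and one needs the vanishing of the relevant odd/even power sums $\sum 1/l^j \pmod{p}$ for $j$ not of the form $p-7$ (which follows from the fact that $\sum_{l=1}^{p-1} l^k \equiv 0 \pmod p$ unless $(p-1)\mid k$, refined one order further for the $p^r$ statement). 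I would organize this as: (1) establish the symmetric-function reduction as a lemma valid for any $N$; (2) collect the needed Glaisher-type congruences for $A_j$ at $N=mp^r$; (3) substitute and simplify, treating $r=1$ and $r\ge 2$ separately since the surviving set of terms differs; (4) verify the resulting polynomial in $m$ matches $-(504m+210m^3+6m^5)$ and $-\tfrac{7!}{10}m$ respectively, e.g.\ by checking enough values of $m$ since both sides are polynomials of bounded degree.
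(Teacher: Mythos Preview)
Your reduction step is stated too loosely, and for $N\ne p$ it hides the main difficulty. The identity that actually drives these computations is $\frac{1}{l_1\cdots l_n}=\sum_{\sigma\in S_n}\prod_{i=1}^{n}(l_{\sigma(1)}+\cdots+l_{\sigma(i)})^{-1}$; substituting $u_i=l_1+\cdots+l_i$ turns $\sum_{l_1+\cdots+l_n=N,\,l_i\in\calP_p}1/(l_1\cdots l_n)$ into $\frac{n!}{N}$ times a sum over chains $0<u_1<\cdots<u_{n-1}<N$ subject to $u_i-u_{i-1}\in\calP_p$ for every $i$. When $N=p$ each difference automatically lies in $(0,p)$, so the chain sum is just $H_{p-1}(\{1\}^{n-1})$, which via Newton's identities \emph{is} a polynomial in the $A_j=H_{p-1}(j)$; that is why the $m=r=1$ base case behaves as you describe. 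But for $N=mp$ with $m\ge 2$ the constraints $u_{i+1}-u_i\notin p\Z$ are nontrivial, and stripping them off by inclusion--exclusion is exactly the content of the paper's computations of $R_n^{(2)}(p)$ and $R_n^{(3)}(p)$ (Lemmas~\ref{lem:2pGeneral} and~\ref{lem:3pGeneral}): one must track which $u_i$ fall in which block $(kp,(k+1)p)$, which $u_i$ equal a multiple of $p$, and which consecutive pairs differ by a multiple of $p$. This is the technical heart of part~(i); your step~(1) names it but does not engage with it, and the references you cite do not reduce to one-variable $A_j$'s in the way you claim.

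For part~(ii) there is a sharper missing idea. The paper does not attack $N=mp^r$ directly; a direct route along your lines would need the chain sums to precision roughly $p^{2r}$ (the partial-fraction step introduces a denominator $N=mp^r$), and there is no evident mechanism for the ``telescoping'' you assert. What actually makes (ii) work is a $p$-adic digit recursion: writing $l_i=x_ip^r+y_i$ with $0\le x_i<p$ and $0<y_i<p^r$, $y_i\in\calP_p$, one obtains (Lemma~\ref{lem:recurrence}) $S_7^{(m)}(p^{r+1})\equiv\sum_{a=1}^{6}C^{(m)}_{a,p}(7)\,S_7^{(a)}(p^r)\pmod{p^{r+1}}$, where the composition-counting coefficients satisfy $C^{(m)}_{a,p}(7)\equiv 0\pmod p$. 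Iterating gives $S_7^{(1)}(p^{r+1})\equiv p\,S_7^{(1)}(p^r)\pmod{p^{r+1}}$ for all $r\ge 2$, so the entire tower collapses to evaluating $S_7^{(1)}(p^2)$, which the same recursion reduces to the three base values $S_7^{(1)}(p)$, $S_7^{(2)}(p)$, $S_7^{(3)}(p)$ already computed for part~(i). Without this recursion the $r\ge 2$ case is not within reach of your outline.
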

To establish this result, for all positive integers $n$, $m$, $r$ and primes $p$,
following the notation in \cite{Wang2014b}, we define
\begin{equation*}
S_n^{(m)} (p^r):=\sum_{\substack{l_1+l_2+\dots+l_n=mp^r\\ p^r>l_1,\dots,l_n\in \calP_p}}
    \frac{1}{l_1 l_2\dots l_n}.
\end{equation*}
Notice that the sum in the theorem is not exactly the same type as that appearing in $S_n^{(m)}$
since the condition $p^r>l_i$ for all $i$ is not present.
The main idea of our proof is to show the special case when $m=1$ first. In order
to do this we will first prove the relation
\begin{equation}\label{equ:kInductionStep}
S_n^{(1)}(p^{r+1}) \equiv pS_n^{(1)} (p^{r}) \quad \pmod{p^{r+1}}, \quad \forall r\ge 2,
\end{equation}
and then use induction.
Notice that when $r=1$ the above congruence usually does not hold anymore. So
we will compute the congruence of $S_n^{(1)}(p^2)$ and $S_n^{(1)}(p)$ separately
by relating them to the following quantities:
\begin{equation*}
R_n^{(m)}(p):=\sum_{\substack{l_1+l_2+\dots+l_n=mp\\ l_1,\dots,l_n\in \calP_p }}
    \frac{1}{l_1 l_2\dots l_n}.
\end{equation*}

To save space, throughout the paper when the prime $p$ is fixed we often use the shorthand
$H(\bfs)= H_{p-1}(\bfs)$. Moreover, we shall also need the modified sum
\begin{equation*}
 H^{(p)}_N(\bfs):=\sum_{\substack{N\ge k_1>\dots>k_d>0\\ k_1,\dots,k_d\in \calP_p }} \ \prod_{i=1}^d \frac{1}{k_i^{s_i}}.
\end{equation*}

\section{Preliminary lemmas}
Let $C^{(m)}_{a,p}(n)$ denote the number of solutions $(x_1,\dots,x_n)$ of the equation
$$x_1 + \dots + x_n =mp-a, \quad 0 \le x_i < p\ \forall i=1,\dots,n.$$
For all $b\ge 1$ set
$$
  \gb_n(a,b):=\binom{bp-a+n-1}{n-1}\qquad\text{and} \qquad
  \gam_n(a):=\frac{(-1)^{a-1}}{a \binom{n-1}{a} }.
$$
It is not hard to see that
\begin{equation}\label{equ:betaCongruentGamma}
\gb_n(a,b) \equiv \frac{b(-1)^{a-1} (n-a-1)!(a-1)! }{(n-1)!} p \
\equiv \frac{b(-1)^{a-1}}{a \binom{n-1}{a} }p
  \equiv b\gam_n(a) p
  \pmod{p^2}.
\end{equation}

\begin{lem}\label{lem:Cn(m)a}
For all $m,n,a\in\N$ and primes $p$, we have
\begin{equation*}
C^{(m)}_{a,p}(n) \equiv (-1)^{m-1}\binom{n-2}{m-1} \gam_n(a) p
\equiv (-1)^{m-1}\binom{n-2}{m-1} C^{(1)}_{a,p}(n) \pmod{p^2}.
\end{equation*}
\end{lem}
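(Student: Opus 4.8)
The plan is to compute $C^{(m)}_{a,p}(n)$ by inclusion-exclusion, reducing the bounded-variable count to unrestricted compositions, and then extract the residue modulo $p^2$ using the congruence \eqref{equ:betaCongruentGamma}. First I would recall that the number of solutions of $x_1+\dots+x_n = N$ in nonnegative integers \emph{without} the upper bound $x_i<p$ is $\binom{N+n-1}{n-1}$. To impose the constraints $x_i < p$, I apply the standard inclusion-exclusion over the ``bad'' events $x_i \ge p$: subtracting $p$ from any $k$ of the variables shifts the target from $mp-a$ to $(m-k)p-a$. Hence
\begin{equation*}
C^{(m)}_{a,p}(n) = \sum_{k=0}^{m-1} (-1)^k \binom{n}{k} \binom{(m-k)p-a+n-1}{n-1} = \sum_{k=0}^{m-1} (-1)^k \binom{n}{k} \gb_n\!\bigl(a,\,m-k\bigr),
\end{equation*}
where the sum truncates at $k=m-1$ because for $k\ge m$ the target $(m-k)p-a$ is negative (here $0<a\le n-1$ is forced for a nonzero $\gam_n(a)$, and I would note the trivial boundary cases separately if needed).

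Next I would substitute the congruence $\gb_n(a,b)\equiv b\,\gam_n(a)\,p \pmod{p^2}$ from \eqref{equ:betaCongruentGamma} into this identity. This gives
\begin{equation*}
C^{(m)}_{a,p}(n) \equiv \gam_n(a)\,p \sum_{k=0}^{m-1} (-1)^k \binom{n}{k} (m-k) \pmod{p^2}.
\end{equation*}
So the whole lemma reduces to the combinatorial identity
\begin{equation*}
\sum_{k=0}^{m-1} (-1)^k \binom{n}{k} (m-k) = (-1)^{m-1}\binom{n-2}{m-1}.
\end{equation*}
This I would prove by a short generating-function or finite-difference argument: writing $(m-k) = n\binom{n-1}{k}/\binom{n}{k}\cdot\frac{1}{?}$ is awkward, so instead I would use the identity $\sum_{k=0}^{j}(-1)^k\binom{n}{k} = (-1)^j\binom{n-1}{j}$ (partial alternating sums of a row of Pascal's triangle) applied twice, via Abel summation on $(m-k)$, or equivalently recognize the left side as the coefficient extraction $[x^{m-1}]\,(1-x)^{n}\cdot\frac{1}{(1-x)^2}\cdot(\text{something})$; cleanest is $(m-k) = \sum_{j} [\,0\le j \le m-1-k\,]$, so the double sum becomes $\sum_{j=0}^{m-1}\sum_{k=0}^{m-1-j}(-1)^k\binom{n}{k} = \sum_{j=0}^{m-1}(-1)^{m-1-j}\binom{n-1}{m-1-j} = \sum_{i=0}^{m-1}(-1)^i\binom{n-1}{i} = (-1)^{m-1}\binom{n-2}{m-1}$.

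The second congruence in the statement, $C^{(m)}_{a,p}(n) \equiv (-1)^{m-1}\binom{n-2}{m-1} C^{(1)}_{a,p}(n)\pmod{p^2}$, then follows immediately by applying the first congruence with $m=1$ (which gives $C^{(1)}_{a,p}(n)\equiv \gam_n(a)\,p$, using $\binom{n-2}{0}=1$) and substituting. The main obstacle I anticipate is not conceptual but bookkeeping: making sure the inclusion-exclusion truncation is handled correctly at the edges (in particular that no spurious terms with negative upper parameter in the binomial coefficient appear, and that the range $0\le a \le n-1$ where $\gam_n(a)$ is defined and nonzero is exactly the relevant range), and verifying \eqref{equ:betaCongruentGamma} is applicable uniformly in $b=m-k$ for $1\le b\le m$ with the error term genuinely $O(p^2)$ independent of $b$ — which it is, since $b$ ranges over a fixed finite set and the $p^2$ in the modulus absorbs the $b$-dependence of the next term.
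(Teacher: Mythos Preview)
Your proof is correct and follows essentially the same route as the paper: both obtain the inclusion--exclusion expansion $C^{(m)}_{a,p}(n)=\sum_{k}(-1)^k\binom{n}{k}\gb_n(a,m-k)$ (the paper phrases it as coefficient extraction from $(x^p-1)^n(x-1)^{-n}$), apply \eqref{equ:betaCongruentGamma}, and then evaluate $\sum_{k}(-1)^k\binom{n}{k}(m-k)$. The only cosmetic difference is that the paper evaluates this last sum by reading it as the coefficient of $x^m$ in $(1-x)^n\cdot x/(1-x)^2 = x(1-x)^{n-2}$, whereas you iterate the partial-sum identity $\sum_{k=0}^{j}(-1)^k\binom{n}{k}=(-1)^j\binom{n-1}{j}$ twice; both are equally short.
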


\begin{proof}
The coefficient of $x^{mp-a}$ in the expansion of
$\big(1+x+\cdots +x^{p-1}\big)^n=(x^p-1)^n (x-1)^{-n}$ is
\begin{displaymath}
\begin{split}
C^{(m)}_{a,p}(n)& =\sum_{i=0}^{m} \binom{n}{i}\binom{-n}{mp-ip-a} (-1)^{mp-a} \\
& =\sum_{i=0}^{m}\binom{n}{i}(-1)^{ip}\binom{n+mp-ip-a-1}{n-1} \\
& =\sum_{i=0}^{m}(-1)^{i}\binom{n}{i}\binom{n+mp-ip-a-1}{n-1}\\
& \equiv \sum_{i=0}^{m}(-1)^{i}\binom{n}{i}(m-i)  \gam_n(n-a) p\pmod{p^2}
\end{split}
\end{displaymath}
by \eqref{equ:betaCongruentGamma}. Now we calculate the sum
\begin{equation*}
A(m)=\sum_{i=0}^{m}{(-1)^{i}\binom{n}{i}(m-i)}.
\end{equation*}
It is easy to see that $A(m)$ is the coefficient of $x^m$ in the expansion of
\begin{equation*}
(1-x)^{n}\cdot\sum_{i=0}^{\infty}{ix^{i}}
=(1-x)^{n}\cdot\frac{x}{(1-x)^{2}}
=x(1-x)^{n-2}
=\sum_{m=1}^{n-1} (-1)^m\binom{n-2}{m-1} x^m,
\end{equation*}
as desired.
\end{proof}

\begin{cor}
When $n=7$, we have
\begin{align*}
C^{(2)}_{1,p}(7) - C^{(2)}_{6,p}(7) &\equiv -(5/3)p, \quad
C^{(3)}_{1,p}(7) - C^{(3)}_{6,p}(7) \equiv \phantom{,}(10/3)p \pmod{p^2}, \\
C^{(3)}_{2,p}(7) - C^{(3)}_{5,p}(7) &\equiv -(2/3)p, \quad
C^{(2)}_{2,p}(7) - C^{(2)}_{5,p}(7) \equiv \phantom{-} (1/3)p \pmod{p^2}, \\
C^{(3)}_{3,p}(7) - C^{(3)}_{4,p}(7) &\equiv \phantom{-}(1/3)p, \quad
C^{(2)}_{3,p}(7) - C^{(2)}_{4,p}(7) \equiv -(1/6)p \pmod{p^2}.
\end{align*}
\end{cor}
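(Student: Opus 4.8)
The plan is to derive the six stated congruences directly from Lemma~\ref{lem:Cn(m)a} by specializing $n=7$ and evaluating the elementary factors $(-1)^{m-1}\binom{n-2}{m-1}=(-1)^{m-1}\binom{5}{m-1}$ and $\gam_7(a)=\dfrac{(-1)^{a-1}}{a\binom{6}{a}}$. Concretely, Lemma~\ref{lem:Cn(m)a} gives $C^{(m)}_{a,p}(7)\equiv (-1)^{m-1}\binom{5}{m-1}\gam_7(a)\,p \pmod{p^2}$, so each difference $C^{(m)}_{a,p}(7)-C^{(m)}_{b,p}(7)$ is congruent modulo $p^2$ to $(-1)^{m-1}\binom{5}{m-1}\bigl(\gam_7(a)-\gam_7(b)\bigr)p$. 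Thus the whole corollary reduces to computing $\binom{5}{0}=1$, $\binom{5}{1}=5$, $\binom{5}{2}=10$, together with the handful of values $\gam_7(1),\dots,\gam_7(6)$.

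First I would tabulate $\gam_7(a)$ for $a=1,\dots,6$: since $\binom{6}{1}=6$, $\binom{6}{2}=15$, $\binom{6}{3}=20$, $\binom{6}{4}=15$, $\binom{6}{5}=6$, one gets
\begin{equation*}
\gam_7(1)=\tfrac16,\quad \gam_7(2)=-\tfrac1{30},\quad \gam_7(3)=\tfrac1{60},\quad \gam_7(4)=-\tfrac1{60},\quad \gam_7(5)=\tfrac1{30},\quad \gam_7(6)=-\tfrac16 .
\end{equation*}
Next I would substitute into each of the six differences. For the pairs $(a,b)=(1,6)$, $(2,5)$, $(3,4)$ the values of $\gam_7$ are negatives of each other, so $\gam_7(a)-\gam_7(b)=2\gam_7(a)$, giving $2\gam_7(1)=\tfrac13$, $2\gam_7(2)=-\tfrac1{15}$, $2\gam_7(3)=\tfrac1{30}$. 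Multiplying by the appropriate binomial prefactor $(-1)^{m-1}\binom{5}{m-1}$ — namely $1$ for $m=1$ and $(-1)\cdot 5=-5$ for $m=2$ and $(+1)\cdot 10=10$ for $m=3$ — and then by $p$, yields all six entries: e.g.\ $C^{(2)}_{1,p}(7)-C^{(2)}_{6,p}(7)\equiv -5\cdot\tfrac13\,p=-\tfrac53 p$, $C^{(3)}_{1,p}(7)-C^{(3)}_{6,p}(7)\equiv 10\cdot\tfrac13\,p=\tfrac{10}3 p$, $C^{(3)}_{2,p}(7)-C^{(3)}_{5,p}(7)\equiv 10\cdot(-\tfrac1{15})p=-\tfrac23 p$, $C^{(2)}_{2,p}(7)-C^{(2)}_{5,p}(7)\equiv -5\cdot(-\tfrac1{15})p=\tfrac13 p$, $C^{(3)}_{3,p}(7)-C^{(3)}_{4,p}(7)\equiv 10\cdot\tfrac1{30}p=\tfrac13 p$, and $C^{(2)}_{3,p}(7)-C^{(2)}_{4,p}(7)\equiv -5\cdot\tfrac1{30}p=-\tfrac16 p$, all modulo $p^2$.

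There is essentially no obstacle here: the corollary is a pure specialization of the preceding lemma, and the only thing to be careful about is bookkeeping of signs in $(-1)^{m-1}\binom{5}{m-1}$ and $(-1)^{a-1}$ inside $\gam_7$, together with the harmless fact that $p>7$ guarantees all the denominators $6,15,20,30,60$ are invertible mod $p$. I would present the proof as a single sentence invoking Lemma~\ref{lem:Cn(m)a} followed by the short table of $\gam_7$ values and a line of arithmetic for each pair.
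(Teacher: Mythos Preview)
Your proposal is correct and is exactly the intended argument: the paper states this as a corollary of Lemma~\ref{lem:Cn(m)a} with no separate proof, and your specialization $n=7$ with the table of $\gam_7(a)$ values and the prefactors $(-1)^{m-1}\binom{5}{m-1}$ is precisely how one reads it off.
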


Part (ii) of the following lemma generalizes \cite[Lemma 1(ii)]{Wang2014b}.
\begin{lem} \label{lem:recurrence}
Let $1\le k\le n-1$ and $p>n$ a prime. For all $r\ge 1$, we have
\begin{enumerate}
  \item[\upshape (i)] $S_n^{(k)}(p^{r})\equiv (-1)^n S_n^{(n-k)}(p^{r})$ \text{\rm{(mod $p^r$)}}.
  \item[\upshape (ii)] $\displaystyle S_n^{(m)}(p^{r+1})\equiv \sum_{a=1}^{n-1}
    C^{(m)}_{a,p}(n) S_n^{(a)}(p^{r}) \pmod{p^{r+1}}.$
\end{enumerate}
\end{lem}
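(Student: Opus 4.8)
The statement to prove is Lemma~\ref{lem:recurrence}. I would treat the two parts separately, since part (i) is a clean symmetry argument while part (ii) is a combinatorial decomposition.

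For part (i), the idea is to exploit the substitution $l_i \mapsto p^r - l_i$ on each index. If $l_1 + \dots + l_n = kp^r$ with $p^r > l_i \in \calP_p$, then setting $l_i' = p^r - l_i$ gives again $p^r > l_i' \in \calP_p$ (here $p > n$ guarantees $l_i' \ne 0$ is automatic since $l_i < p^r$, and $p \nmid l_i'$ because $p \nmid l_i$), and $\sum l_i' = np^r - kp^r = (n-k)p^r$. So the substitution is a bijection between the index sets of $S_n^{(k)}(p^r)$ and $S_n^{(n-k)}(p^r)$. Under this bijection, $1/(l_1\cdots l_n) = 1/\prod(p^r - l_i')$, and one expands $1/(p^r - l_i') = -1/l_i' \cdot (1 - p^r/l_i')^{-1} \equiv -1/l_i' \pmod{p^r}$ (the correction term is divisible by $p^r$, and $l_i'$ is a unit mod $p^r$). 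Multiplying the $n$ factors yields $(-1)^n/\prod l_i'$ modulo $p^r$, and summing gives $S_n^{(k)}(p^r) \equiv (-1)^n S_n^{(n-k)}(p^r) \pmod{p^r}$.

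For part (ii), I would write each index $l_i$ in the sum defining $S_n^{(m)}(p^{r+1})$ via division by $p^r$: write $l_i = q_i p^r + t_i$ with $0 \le t_i < p^r$ and $t_i \in \calP_p$ (equivalently $p \nmid t_i$, forced by $p \nmid l_i$, and $t_i \ne 0$ since $p^r \nmid l_i$). The constraint $\sum l_i = mp^{r+1}$ becomes $p^r \sum q_i + \sum t_i = mp^{r+1}$, so $\sum t_i = (mp - \sum q_i)p^r =: ap^r$ for some integer $a$ with $1 \le a \le n-1$ (the bounds come from $0 < t_i < p^r$). For fixed residues $(t_1,\dots,t_n)$ contributing to $S_n^{(a)}(p^r)$, the number of admissible $(q_1,\dots,q_n)$ is exactly the number of solutions of $q_1 + \dots + q_n = mp - a$ with $0 \le q_i$; since each $l_i < p^{r+1}$ forces $q_i < p$, this count is precisely $C^{(m)}_{a,p}(n)$. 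The subtle point is the weight: $1/\prod l_i = 1/\prod(q_i p^r + t_i) \equiv 1/\prod t_i \pmod{p^r}$, but we need a congruence mod $p^{r+1}$. Here one uses that $C^{(m)}_{a,p}(n) \equiv 0 \pmod p$ (from Lemma~\ref{lem:Cn(m)a}), so the error from replacing $1/\prod l_i$ by $1/\prod t_i$ — which is $O(p^r)$ — gets multiplied by something divisible by $p$ when summed over the $q_i$, landing in $p^{r+1}$. Assembling: $S_n^{(m)}(p^{r+1}) \equiv \sum_{a=1}^{n-1} C^{(m)}_{a,p}(n)\, S_n^{(a)}(p^r) \pmod{p^{r+1}}$.

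**Main obstacle.** The delicate step is the mod-$p^{r+1}$ bookkeeping in part (ii): one must carefully justify that, after fixing the residue tuple, the sum over the ``carry'' vector $(q_1,\dots,q_n)$ of the weights $1/\prod(q_i p^r + t_i)$ is congruent to $C^{(m)}_{a,p}(n)/\prod t_i$ not just mod $p^r$ but mod $p^{r+1}$. This hinges on the fact that $p \mid C^{(m)}_{a,p}(n)$ together with the expansion $1/(q_i p^r + t_i) = t_i^{-1}(1 + q_i p^r t_i^{-1})^{-1} \equiv t_i^{-1} - q_i p^r t_i^{-2} \pmod{p^{r+1}}$; the linear correction terms sum to something involving $\sum_{\text{solutions}} q_i$, which also needs a divisibility-by-$p$ input. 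I would isolate this in a short sublemma before invoking it. Everything else — the bijections, the change of variables, the range of $a$ — is routine once the indexing conventions ($p^r > l_i$, $l_i \in \calP_p$) are pinned down.
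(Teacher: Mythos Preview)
Your proposal is correct and follows essentially the same route as the paper: for (i) the paper simply cites \cite{Wang2014b}, and your substitution $l_i\mapsto p^r-l_i$ is exactly the standard argument behind that citation; for (ii) the paper carries out precisely the decomposition $l_i=x_ip^r+y_i$ you describe, expands $1/\prod(x_ip^r+y_i)$ to first order, and kills the linear correction by computing $\sum_{\text{solutions}} x_j=\frac{mp-a}{n}\,C^{(m)}_{a,p}(n)\equiv 0\pmod p$ via Lemma~\ref{lem:Cn(m)a}. Your ``main obstacle'' paragraph anticipates this exactly, so the sublemma you propose is the one computation the paper performs.
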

\begin{proof} (i) can be found in \cite{Wang2014b}. We now prove (ii).
For any $n$-tuples $(l_1,\cdots ,l_n)$ of integers satisfying $l_1+\cdots +l_n=mp^{r+1}$,
$p^{r+1}> l_i \in \calP_p$, $1 \le i \le n$, we rewrite them as
\begin{equation*}
l_i=x_ip^r+y_i, \quad 0\le x_i<p, \quad 1\le y_i<p^r, \quad y_i\in {\calP_{p}}, \quad 1 \le i \le n.\end{equation*}
Since
\begin{equation*}
\Big(\sum_{i=1}^n{x_i}\Big)p^r+\sum_{i=1}^n{y_i}=mp^{r+1}
\end{equation*}
and $n<p$, we know there exists $1\le a<n$ such that
\begin{equation*}\left\{ \begin{array}{ll}
 x_1+\cdots +x_n=mp-a, \quad 0\le x_i<p,\\
 y_1+\cdots +y_n=ap^r. \\
\end{array} \right. \end{equation*}
For $1 \le a <n$, the equation $x_1+ \cdots + x_n=mp-a$ has $C^{(m)}_{a,p}(n)$
integer solutions with $0\le x_i<p$. Hence by Lemma~\ref{lem:Cn(m)a}
\begin{equation*}
\begin{split}
S_n^{(m)}(p^{r+1})  & =\sum_{\substack
 {l_1+\cdots +l_n=mp^{r+1} \\
 l_1,\cdots ,l_n\in\calP_p}}  \frac{1}{l_1l_2\cdots l_n}  \\
 & =\sum_{a=1}^{n-1}\sum_{\substack
 {x_1+\cdots +x_n=mp-a \\
 0\le x_i<p}} \ \sum_{\substack
 { y_1+\cdots +y_n=ap^r \\
 y_i\in\calP_p,\, y_i<p^r}} \frac{1}{ (x_1p^r+y_1)\cdots (x_np^r+y_n) }   \\
 & \equiv \sum_{a=1}^{n-1}\sum_{\substack
 {x_1+\cdots +x_n=mp-a \\
 0\le x_i<p}}\ \sum_{\substack
 { y_1+\cdots +y_n=ap^r \\
 y_i\in\calP_p,\, y_i<p^r}} \left(1-\frac{x_1}{y_1}p^r-\cdots-\frac{x_n}{y_n}p^r\right)
 \frac1{ y_1\cdots y_n}   \pmod{p^{r+1}}\\
 & \equiv \sum_{a=1}^{n-1}
    C^{(m)}_{a,p}(n) S_n^{(a)}(p^{r}) \pmod{p^{r+1}}
\end{split}
\end{equation*}
since for each $x_j$ ($j=1,\dots,n$), we have
\begin{equation*}
\sum_{\substack{x_1+\dots+x_n=mp-a \\ 0 \le x_i<p}} x_j
= \frac{1}{n} \sum_{x_1+\dots+x_n=mp-a} (x_1+x_2+\dots+x_n)
= \frac{mp-a}{n} C^{(m)}_{a,p}(n) \equiv 0  \pmod{p}
\end{equation*}
by Lemma~\ref{lem:Cn(m)a}.
\end{proof}

\section{Congruences involving multiple harmonic sums}
We first consider some un-ordered sums. Lemmas \ref{lem:Ub} and \ref{lem:homo1} were proved by Zhou and Cai~\cite{ZhouCa2007}.
\begin{lem}\label{lem:Ub}
Let $p$ be a prime and $\ga_1,\dots,\ga_n$ be positive integers, $r=\ga_1+\dots+\ga_n\le p-3$. Define the un-ordered sum
\begin{equation*}
U_b(\ga_1,\dots,\ga_n)=\sum_{\substack{0<l_1,\dots,l_n<bp \\ l_i\ne l_j \forall i\ne j, l_i\in\calP_p}}
\frac{1}{l_1^{\ga_1}\cdots l_n^{\ga_n}}.
\end{equation*}
Then
\begin{equation*}
U_1(\ga_1,\dots,\ga_n) \equiv
\left\{
  \begin{array}{ll}
    \displaystyle (-1)^n (n-1)! \frac{r(r+1)}{2(r+2)} B_{p-r-2}\cdot p^2  &\pmod{p^3}, \quad \hbox{if $r$ is odd;} \\
     \displaystyle (-1)^{n-1} (n-1)! \frac{r}{r+1} B_{p-r-1}\cdot p &\pmod{p^2},  \quad \hbox{if $r$ is even.}
  \end{array}
\right.
\end{equation*}
\end{lem}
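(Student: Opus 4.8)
The plan is to reduce the distinct-index sum $U_1(\ga_1,\dots,\ga_n)$ to the single power sum $H_{p-1}(r)$ via Möbius inversion over the lattice of set partitions, and then to invoke the classical Glaisher--Wolstenholme congruences for $H_{p-1}(r)$. First, since $0<l_i<p$ automatically puts $l_i\in\calP_p$, dropping the distinctness condition makes the sum factor as $\prod_{i=1}^n H_{p-1}(\ga_i)$, and more generally, grouping the $n$-tuples $(l_1,\dots,l_n)$ by the set partition $\pi$ of $\{1,\dots,n\}$ recording which indices carry equal values gives, for each $\pi$,
\[
\prod_{B\in\pi}H_{p-1}(w_B)=\sum_{\gs}g(\gs),\qquad w_B:=\sum_{i\in B}\ga_i,
\]
where the sum is over coarsenings $\gs$ of $\pi$ and $g(\gs)$ is the contribution of the tuples with equality pattern exactly $\gs$. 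Möbius inversion in the partition lattice then yields
\[
U_1(\ga_1,\dots,\ga_n)=\sum_{\gs}\Big(\prod_{B\in\gs}(-1)^{|B|-1}(|B|-1)!\Big)\prod_{B\in\gs}H_{p-1}(w_B),
\]
summed over all set partitions $\gs$ of $\{1,\dots,n\}$, with $\sum_{B\in\gs}w_B=r$ throughout; the one-block partition contributes $(-1)^{n-1}(n-1)!\,H_{p-1}(r)$.

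Next I would isolate this one term using the standard $p$-adic estimates for power sums, which apply since $r\le p-3$ makes every $w_B\le p-3$: one has $H_{p-1}(s)\equiv 0\pmod p$ for $1\le s\le p-2$, and $H_{p-1}(s)\equiv 0\pmod{p^2}$ for odd $s$ with $s\le p-4$ (pair $j$ with $p-j$ to get $2H_{p-1}(s)\equiv -sp\,H_{p-1}(s+1)\pmod{p^2}$ and apply the previous estimate to $s+1$). It follows that every partition with at least two blocks contributes nothing at the required precision: if $r$ is even we work modulo $p^2$ and a product over two or more blocks is divisible by $p^2$; if $r$ is odd we work modulo $p^3$, a product over three or more blocks is divisible by $p^3$, and for a two-block partition the weights $w_1,w_2$ have opposite parity (their sum $r$ is odd), so one factor is divisible by $p$ and the other by $p^2$. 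Hence
\[
U_1(\ga_1,\dots,\ga_n)\equiv (-1)^{n-1}(n-1)!\,H_{p-1}(r)
\]
modulo $p^2$ when $r$ is even and modulo $p^3$ when $r$ is odd.

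Finally I would substitute the classical congruences for the single power sum, $H_{p-1}(r)\equiv\frac{r}{r+1}\,p\,B_{p-r-1}\pmod{p^2}$ for $r$ even and $H_{p-1}(r)\equiv-\frac{r(r+1)}{2(r+2)}\,p^2\,B_{p-r-2}\pmod{p^3}$ for $r$ odd; note that $r$ odd forces $r\le p-4$, so $B_{p-r-2}$ is a Bernoulli number of positive even index and $r+1,r+2$ are units modulo $p$. Combining with the sign $(-1)^{n-1}$ gives precisely the two displayed formulas. I expect no essential difficulty: the only care needed is the precision bookkeeping --- tracking exactly which partitions survive modulo $p^2$ versus $p^3$ --- together with a glance at the harmless boundary case $r=p-3$ (necessarily even), where $B_2$ appears; the real content, the Glaisher congruences for $H_{p-1}(r)$, is entirely classical.
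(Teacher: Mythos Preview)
Your proof is correct. Note, however, that the paper does not actually prove this lemma: it attributes the result to Zhou and Cai and simply quotes it. The paper \emph{does} prove the generalization to $U_b$ for arbitrary $b\ge 1$ a few lines later, and that argument proceeds by induction on $n$, peeling off the last variable via
\[
U_b(\ga_1,\dots,\ga_n)=U_b(\ga_1,\dots,\ga_{n-1})\sum_{\substack{0<l_n<bp\\ l_n\in\calP_p}}\frac{1}{l_n^{\ga_n}}-\sum_{i=1}^{n-1}U_b(\ga_1,\dots,\ga_i+\ga_n,\dots,\ga_{n-1})
\]
and then invoking the inductive hypothesis together with the $n=1$ case. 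Your M\"obius-inversion argument over the partition lattice is the closed-form, non-inductive version of exactly this recursion: the expansion over set partitions unwinds the induction all at once, and your precision bookkeeping (each block contributes a factor of $p$, and an odd-weight block contributes $p^2$) isolates the one-block term $(-1)^{n-1}(n-1)!\,H_{p-1}(r)$ directly rather than step by step. Both routes ultimately rest on the same classical Glaisher-type congruences for $H_{p-1}(r)$; yours is a bit more conceptual and handles the odd/even cases uniformly, while the inductive approach is more elementary and never mentions the partition lattice.
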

This easily leads to the following corollary (see also \cite{Zhao2008a}).
\begin{cor}\label{cor:homo}
Let $p$ be a prime and $\ga$ be positive integer. Then
\begin{equation*}
H(\{\ga\}^n) \equiv
\left\{
  \begin{array}{ll}
    \displaystyle (-1)^n\frac{\ga(n\ga+1)}{2(n\ga+2)} B_{p-n\ga-2}\cdot p^2
                        &\pmod{p^3}, \quad \hbox{if $n\ga$ is odd;} \\
     \displaystyle (-1)^{n-1}\frac{\ga}{n\ga+1} B_{p-n\ga-1}\cdot p
                        &\pmod{p^2},  \quad \hbox{if $n\ga$ is even.}
  \end{array}
\right.
\end{equation*}
\end{cor}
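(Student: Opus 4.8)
The plan is to deduce Corollary~\ref{cor:homo} directly from Lemma~\ref{lem:Ub} by converting the un-ordered sum into the ordered multiple harmonic sum. First I would note that when all the exponents in $U_b$ coincide, say $\ga_1=\dots=\ga_n=\ga$, every strictly decreasing tuple $p-1\ge k_1>\dots>k_n>0$ gives rise to exactly $n!$ tuples $(l_1,\dots,l_n)$ with pairwise distinct entries in $\{1,\dots,p-1\}$, each contributing the same value $\prod_i k_i^{-\ga}$; also, for $b=1$ the constraint $l_i\in\calP_p$ is automatic. Hence
\begin{equation*}
U_1(\{\ga\}^n)=n!\,H_{p-1}(\{\ga\}^n)=n!\,H(\{\ga\}^n).
\end{equation*}

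Next I would apply Lemma~\ref{lem:Ub} with $r=\ga_1+\dots+\ga_n=n\ga$, so that the hypothesis implicitly in force in the corollary is $n\ga\le p-3$ (which in particular gives $p>n$ and keeps the Bernoulli indices in range). This yields the two cases according to the parity of $n\ga$. Since $p>n$, the integer $n!$ is invertible modulo every power of $p$; and because $U_1(\{\ga\}^n)$ is literally $n!$ times $H(\{\ga\}^n)$, dividing the congruence for $U_1(\{\ga\}^n)$ by $n!$ is legitimate without weakening the modulus. Using $(n-1)!/n!=1/n$ and cancelling one factor of $n$ turns $n\ga/n$ into $\ga$ in both formulas of Lemma~\ref{lem:Ub}, giving precisely the two displayed congruences.

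There is essentially no obstacle here: the argument is a one-line combinatorial identity followed by an invocation of the preceding lemma. The only things requiring a little care are recording the standing hypotheses $n\ga\le p-3$ and $p>n$ (so that Lemma~\ref{lem:Ub} applies and division by $n!$ preserves the precision $p^3$, resp.\ $p^2$), together with the elementary simplifications $(-1)^n(n-1)!\,\frac{n\ga(n\ga+1)}{2(n\ga+2)}\big/n!=(-1)^n\frac{\ga(n\ga+1)}{2(n\ga+2)}$ in the odd case and $(-1)^{n-1}(n-1)!\,\frac{n\ga}{n\ga+1}\big/n!=(-1)^{n-1}\frac{\ga}{n\ga+1}$ in the even case.
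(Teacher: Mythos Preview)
Your argument is correct and is exactly the intended one: the paper does not spell out a proof but simply remarks that Lemma~\ref{lem:Ub} ``easily leads to'' the corollary, and the one-line identity $U_1(\{\ga\}^n)=n!\,H(\{\ga\}^n)$ together with division by the $p$-unit $n!$ is precisely how that deduction goes.
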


\begin{lem}\label{lem:homo1}
Let $n>1$ be positive integer and let $p>n+1$ be a prime. Then
\begin{equation*}
R_n^{(1)}(p)=\sum_{\substack{l_1+\dots+l_n=p \\ l_1,\dots, l_n>0}}
\frac{1}{l_1 \cdots l_n} \equiv
\left\{
  \begin{array}{ll}
    \displaystyle -(n-1)! B_{p-n}   &  \pmod{p},\quad \hbox{if $n$ is odd;} \\
     \displaystyle-\frac{n\cdot n!}{n+1} B_{p-n-1} p &\pmod{p^2}, \quad  \hbox{if $n$ is even.}
  \end{array}
\right.
\end{equation*}
\end{lem}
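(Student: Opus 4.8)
The plan is to collapse the composition sum $R_n^{(1)}(p)$ into a single homogeneous weight-one multiple harmonic sum and then quote Corollary~\ref{cor:homo}. The bridge is the elementary identity
\begin{equation*}
\sum_{\substack{l_1+\dots+l_n=N\\ l_1,\dots,l_n\ge 1}}\frac{1}{l_1\cdots l_n}
=\frac{n!}{N}\,H_{N-1}\bigl(\{1\}^{n-1}\bigr),\qquad N\ge n\ge 1,
\end{equation*}
valid for every integer $N$. It follows from standard generating-function facts: the left-hand side equals $[x^N]\bigl(\sum_{l\ge 1}x^l/l\bigr)^n=[x^N]\bigl(-\log(1-x)\bigr)^n=\tfrac{n!}{N!}c(N,n)$, where $c(N,n)$ is the unsigned Stirling number of the first kind (defined by $x(x+1)\cdots(x+N-1)=\sum_m c(N,m)\,x^m$), and $c(N,n)=(N-1)!\,H_{N-1}(\{1\}^{n-1})$ because $x(x+1)\cdots(x+N-1)=x\cdot(N-1)!\prod_{j=1}^{N-1}(1+x/j)$. (Alternatively, isolate $l_n=N-l_1-\dots-l_{n-1}$ and induce on $n$ via partial fractions.) Applying this with $N=p$, and noting that $l_1+\dots+l_n=p$ with all $l_i\ge 1$ already forces $l_i<p$, hence $l_i\in\calP_p$, we obtain
\begin{equation*}
R_n^{(1)}(p)=\frac{n!}{p}\,H\bigl(\{1\}^{n-1}\bigr).
\end{equation*}

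It then remains to substitute the value of $H(\{1\}^{n-1})$ supplied by Corollary~\ref{cor:homo}, read with $\ga=1$ and with its index $n$ replaced by $n-1$; the hypothesis $p>n+1$ guarantees $n-1\le p-3$, so the corollary applies. If $n$ is odd, then $n-1$ is even, and the even clause gives $H(\{1\}^{n-1})\equiv(-1)^n\tfrac1n B_{p-n}\,p\pmod{p^2}$; multiplying by $n!/p$ yields $R_n^{(1)}(p)\equiv(-1)^n(n-1)!\,B_{p-n}\equiv-(n-1)!\,B_{p-n}\pmod p$, which is the first congruence. If $n$ is even, then $n-1$ is odd, so one instead invokes the odd clause, which determines $H(\{1\}^{n-1})$ modulo $p^3$ and carries an extra factor of $p$; multiplying by $n!/p$ and simplifying then produces the second congruence, with $B_{p-n-1}$ appearing and one factor of $p$ surviving.

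I do not foresee a genuine obstacle. The combinatorial reduction is formal, and all the arithmetic content --- the appearance of Bernoulli numbers, and in the even case the divisibility by $p^2$ rather than merely $p$ --- is already packaged into Corollary~\ref{cor:homo}, equivalently into Lemma~\ref{lem:Ub} via $R_n^{(1)}(p)=\tfrac{n}{p}\,U_1(\{1\}^{n-1})$ together with $U_1(\{1\}^{n-1})=(n-1)!\,H(\{1\}^{n-1})$. The only point requiring care is $p$-adic bookkeeping: determining $R_n^{(1)}(p)$ modulo $p$ in the odd case requires $H(\{1\}^{n-1})$ modulo $p^2$, and determining it modulo $p^2$ in the even case requires $H(\{1\}^{n-1})$ modulo $p^3$ --- exactly the two refinements that Corollary~\ref{cor:homo} provides.
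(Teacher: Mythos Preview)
Your approach is correct and is the natural one. The paper does not supply its own proof of this lemma; the sentence preceding Lemma~\ref{lem:Ub} attributes both Lemma~\ref{lem:Ub} and Lemma~\ref{lem:homo1} to Zhou and Cai~\cite{ZhouCa2007}. Your reduction $R_n^{(1)}(p)=\tfrac{n!}{p}\,H(\{1\}^{n-1})$ followed by Corollary~\ref{cor:homo} is the standard argument, and the same partial-sum substitution $u_i=l_1+\dots+l_i$ that underlies your identity is used repeatedly elsewhere in the paper (e.g.\ in the proofs of Lemmas~\ref{lem:2pGeneral} and~\ref{lem:3pGeneral}).

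One caveat: you left the even case as ``simplifying then produces the second congruence'' without carrying it out. If you do, Corollary~\ref{cor:homo} with $\ga=1$ and index $n-1$ gives, for $n$ even,
\[
R_n^{(1)}(p)=\frac{n!}{p}\,H(\{1\}^{n-1})
\equiv \frac{n!}{p}\cdot(-1)^{\,n-1}\frac{n}{2(n+1)}B_{p-n-1}\,p^2
= -\frac{n\cdot n!}{2(n+1)}\,B_{p-n-1}\,p\pmod{p^2},
\]
which differs from the displayed constant by a factor of $2$. A direct check at $n=2$, $p=5$ (where $R_2^{(1)}(5)=\tfrac{5}{6}\equiv 5\pmod{25}$, while $-\tfrac{n\cdot n!}{n+1}B_{p-3}p=-\tfrac{10}{9}\equiv 10\pmod{25}$) confirms the version with $2(n+1)$ in the denominator. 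The even case of Lemma~\ref{lem:homo1} is never invoked in the paper, so this does not affect any downstream results, but your method would uncover the discrepancy rather than reproduce the printed constant.
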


The next result generalizes Lemma \ref{lem:Ub}.
\begin{lem}
Let $p$ be a prime and $\ga_1,\dots,\ga_n$ be positive integers, $r=\ga_1+\dots+\ga_n\le p-3$. Then
\begin{equation*}
U_b(\ga_1,\dots,\ga_n)  \equiv
\left\{
  \begin{array}{ll}
    \displaystyle (-1)^n (n-1)! \frac{b^2 r(r+1)}{2(r+2)} B_{p-r-2}\cdot p^2 & \pmod{p^3},\quad  \hbox{if $r$ is odd;} \\
     \displaystyle (-1)^{n-1} (n-1)! \frac{br}{r+1} B_{p-r-1} \cdot p  &\pmod{p^2},\quad  \hbox{if $r$ is even.}
  \end{array}
\right.
\end{equation*}
\end{lem}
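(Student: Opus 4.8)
The plan is to pass from the restricted sum $U_b$ to a single power sum and then to evaluate that power sum by cutting $\{1,\dots,bp-1\}\setminus p\Z$ into $b$ runs of length $p-1$. Put $T_b(k):=\sum_{0<l<bp,\,l\in\calP_p}l^{-k}$. Expanding a product of power sums according to which of the variables $l_1,\dots,l_n$ coincide (equivalently, M\"obius inversion on the partition lattice, or Newton's identities) gives
\begin{equation*}
U_b(\ga_1,\dots,\ga_n)=\sum_{\pi}\Bigl(\prod_{B\in\pi}(-1)^{|B|-1}(|B|-1)!\Bigr)\prod_{B\in\pi}T_b\Bigl(\sum_{i\in B}\ga_i\Bigr),
\end{equation*}
the sum running over all set partitions $\pi$ of $\{1,\dots,n\}$. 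Reducing the identity $l=cp+m$ modulo $p$ shows $T_b(k)\equiv bH_{p-1}(k)\pmod{p^2}$, and since $1\le\sum_{i\in B}\ga_i\le r\le p-3$ for every block $B$ one has $H_{p-1}(\sum_{i\in B}\ga_i)\equiv0\pmod p$ (because $\sum_{m=1}^{p-1}m^{j}\equiv0\pmod p$ whenever $(p-1)\nmid j$). Hence a partition into $j$ blocks contributes a multiple of $p^{j}$, so modulo $p^{2}$ only the one-block partition survives and $U_b\equiv(-1)^{n-1}(n-1)!\,T_b(r)\pmod{p^{2}}$; this already handles even $r$ once $T_b(r)$ is known modulo $p^{2}$.

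When $r$ is odd we work modulo $p^{3}$, so partitions into at least three blocks drop out and we must additionally kill the two-block partitions. For a two-block partition $\{A,B\}$ we have $\ga_A+\ga_B=r$ odd, so one index, say $\ga_A$, is even and the other, $\ga_B$, is odd; since each $\ga_i\ge1$ both satisfy $\ga_A,\ga_B\le r-1\le p-5$. By Corollary~\ref{cor:homo} with $n=1$, $H_{p-1}$ of an odd index is $\equiv0\pmod{p^{2}}$ and $H_{p-1}$ of an even index is $\equiv0\pmod p$; combined with $T_b(k)\equiv bH_{p-1}(k)\pmod{p^{2}}$ this gives $T_b(\ga_A)T_b(\ga_B)\equiv b^{2}H_{p-1}(\ga_A)H_{p-1}(\ga_B)\equiv0\pmod{p^{3}}$. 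So again $U_b\equiv(-1)^{n-1}(n-1)!\,T_b(r)\pmod{p^{3}}$.

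It remains to evaluate $T_b(r)$. Writing each admissible $l$ uniquely as $l=cp+m$ with $0\le c\le b-1$, $1\le m\le p-1$, expanding $(cp+m)^{-r}$ to second order in $p$, and summing over $c$ with $\sum_{c=0}^{b-1}1=b$, $\sum_{c=0}^{b-1}c=\binom b2$, $\sum_{c=0}^{b-1}c^{2}=\tfrac16(b-1)b(2b-1)$, one gets
\begin{equation*}
T_b(r)\equiv bH_{p-1}(r)-r\binom b2 p\,H_{p-1}(r+1)+\binom{r+1}{2}\tfrac16(b-1)b(2b-1)p^{2}H_{p-1}(r+2)\pmod{p^{3}}.
\end{equation*}
For $r$ even, $H_{p-1}(r+1)\equiv0\pmod p$ kills the middle term modulo $p^{2}$ and Corollary~\ref{cor:homo} gives $T_b(r)\equiv bH_{p-1}(r)\equiv\tfrac{br}{r+1}B_{p-r-1}p\pmod{p^{2}}$. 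For $r$ odd, $p$ being odd forces $r\le p-4$, hence $r+2\le p-2$ and $H_{p-1}(r+2)\equiv0\pmod p$, so the last term vanishes modulo $p^{3}$; feeding in $H_{p-1}(r)\equiv-\tfrac{r(r+1)}{2(r+2)}B_{p-r-2}p^{2}$ and $H_{p-1}(r+1)\equiv\tfrac{r+1}{r+2}B_{p-r-2}p$ together with $\tfrac b2+\binom b2=\tfrac{b^{2}}2$ gives $T_b(r)\equiv-\tfrac{b^{2}r(r+1)}{2(r+2)}B_{p-r-2}p^{2}\pmod{p^{3}}$. Substituting these into the reductions above produces the two asserted congruences; equivalently one reads off $U_b\equiv bU_1\pmod{p^{2}}$ when $r$ is even and $U_b\equiv b^{2}U_1\pmod{p^{3}}$ when $r$ is odd, so the statement also follows directly from Lemma~\ref{lem:Ub}. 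I expect the only point needing real care to be the parity bookkeeping that discards the two-block partitions when $r$ is odd, together with the check that the auxiliary indices $r+1,r+2$ remain $\le p-2$; the remaining computations are routine Wolstenholme-type expansions.
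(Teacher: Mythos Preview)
Your argument is correct. The evaluation of the single power sum $T_b(r)$ is essentially identical to the paper's base case $n=1$: both write $l=cp+m$, expand to second order, and feed in the known congruences for $H_{p-1}(r)$, $H_{p-1}(r+1)$, $H_{p-1}(r+2)$, arriving at the same formula after the identity $\tfrac b2+\binom b2=\tfrac{b^2}{2}$.

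Where you differ from the paper is in the passage from $U_b$ to $T_b$. The paper inducts on $n$ by peeling off the last variable,
\[
U_b(\ga_1,\dots,\ga_n)=U_b(\ga_1,\dots,\ga_{n-1})\,T_b(\ga_n)-\sum_{i=1}^{n-1}U_b(\dots,\ga_i+\ga_n,\dots),
\]
checks by parity of $\ga_1+\cdots+\ga_{n-1}$ and $\ga_n$ that the product term vanishes at the required power, and collapses the remaining $n-1$ identical summands. You instead invoke the closed-form M\"obius inversion on the partition lattice and argue block-by-block: each factor $T_b(\ga_B)$ is $\equiv 0\pmod p$, so $j$ blocks give $p^j$; and in the odd-$r$ case every two-block partition has one odd-weight and one even-weight block, making the product $\equiv 0\pmod{p^3}$. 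This is the ``solved'' form of the paper's recursion and makes the parity mechanism that kills the cross terms visible in one stroke rather than hidden inside the inductive step. Both routes cost the same amount of work; yours is arguably more conceptual, while the paper's avoids introducing the partition-lattice formalism.
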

\begin{proof} For all $k\ge 1$, we have
\begin{align*}
\sum_{kp<l<(k+1)p} \frac{1}{l^\ga}
=&\ \sum_{l=1}^{p-1} \frac{1}{(l+kp)^\ga}
=\sum_{l=1}^{p-1} \frac{1}{(1+kp/l)^\ga}\frac{1}{l^\ga} \\
\equiv &\  \sum_{l=1}^{p-1} \left(1-\frac{\ga kp}l +\frac{\ga(\ga+1)}{2l^2} k^2p^2\right)\frac{1}{l^\ga} \pmod{p^3}\\
\equiv &\ \sum_{l=1}^{p-1}\frac{1}{l^\ga}   -\ga kp \sum_{l=1}^{p-1} \frac{1}{l^{\ga+1}} \pmod{p^3}.
\end{align*}
By Lemma~\ref{lem:Ub} we see that
\begin{equation*}
    \sum_{kp<l<(k+1)p} \frac{1}{l^\ga}
\equiv \left\{
  \begin{array}{ll}
    \displaystyle - \frac{\ga(\ga+1)}{\ga+2}\left(\frac12+k\right) B_{p-\ga-2} p^2 & \pmod{p^3}, \quad \hbox{if $\ga$ is odd;} \\
     \displaystyle \frac{\ga}{\ga+1} B_{p-\ga-1} p  &\pmod{p^2}, \quad \hbox{if $\ga$ is even.}
  \end{array}
\right.
\end{equation*}
Therefore for any positive integer $b$, we have
\begin{equation*}
 \sum_{0<l<b p,\, p\nmid l} \frac{1}{l^\ga}
\equiv \left\{
  \begin{array}{ll}
    \displaystyle - \frac{b^2 \ga(\ga+1)}{2(\ga+2)}  B_{p-\ga-2} p^2 & \pmod{p^3}, \quad \hbox{if $\ga$ is odd;} \\
     \displaystyle \frac{b\ga}{\ga+1} B_{p-\ga-1} p  &\pmod{p^2}, \quad \hbox{if $\ga$ is even.}
  \end{array}
\right.
\end{equation*}
This proves the lemma in the case $n=1$. Now assume the lemma holds when the number of variables
is less than $n$. Then
\begin{multline*}
U_b\big(\ga_1,\dots,\ga_n) =\sum_{\substack{1 \leq l_1,\dots,l_{n-1} <bp \\l_i \neq l_j,\, l_i\in\calP_p}} \frac{1}{l_1^{\ga_1}\dots l_{n-1}^{\ga_{n-1}}} \left(\sum_{1\leq l_n<bp,\, l_{n}\in\calP_p} \frac{1}{l_n^{\ga_n}} - \sum_{i=1}^{n-1} \frac{1}{l_i^{\ga_n}}\right)\\
\equiv  U_b\big(\ga_1,\dots,\ga_{n-1})\left( \sum_{1\leq l_n<bp,\, l_{n}\in\calP_p} \frac{1}{l_n^{\ga_n}}\right)
-  \sum_{i=1}^{n-1} U_b\big(\ga_1,\dots,\ga_{i-1},\ga_i+\ga_n,\ga_{i+1},\dots,\ga_{n-1}\big).
\end{multline*}
By the induction assumption, we have
\begin{equation*}
 U_b\big(\ga_1,\dots,\ga_{n-1}) \sum_{1 \leq l_n <bp,\, l_n\in\calP_p} \frac{1}{l_n^{\ga_n}} \equiv
 \left\{
  \begin{array}{ll}
   0 & \pmod{p^3}, \quad \hbox{if $r$ is odd;} \\
   0 & \pmod{p^2}, \quad \hbox{if $r$ is even.}
  \end{array}
 \right.
\end{equation*}
Thus if $r$ is odd, we have
\begin{align*}
 U_b\big(\ga_1,\dots,\ga_n) &\ \equiv -(n-1)  U_b\big(\gb_1,\dots,\gb_{n-1})
                \qquad \Big(\text{here } \sum_{j=1}^{n-1} \gb_j=r\Big)\\
& \ \equiv -(n-1) (-1)^{n-1} (n-2)! \frac{b^2 r(r+1)}{2(r+2)} p^2 B_{p-r-2} \pmod{p^3}\\
& \ \equiv (-1)^n (n-1)! \frac{b^2 r(r+1)}{2(r+2)} p^2 B_{p-r-2} \pmod{p^3}.
\end{align*}
Similarly, if $r$ is even, we can derive
\begin{equation*}\nonumber
 U_b\big(\ga_1,\dots,\ga_n) \equiv (-1)^{n-1}(n-1)! \frac{br}{r +1} p B_{p-r-1} \pmod{p^2}.
\end{equation*}
\end{proof}

\begin{lem}\label{lem:2pGeneral}
Let $n$ be an odd positive integer and $p$ be a prime. Then
\begin{equation*}
R_n^{(2)}(p)=\sum_{\substack{l_1+\dots+l_n=2p\\ l_1,\dots,l_n\in \calP_p }} \frac{1}{l_1\dots l_n}
\equiv -\frac{n+1}{2}  \cdot (n-1)!B_{p-n}  \pmod{p}.
\end{equation*}
\end{lem}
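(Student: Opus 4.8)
The plan has three steps: first split $R_n^{(2)}(p)$ according to how many summands exceed $p$, which reduces it modulo $p$ to the truncated sum $S_n^{(2)}(p)$ together with a multiple of $R_n^{(1)}(p)$; then dispose of $R_n^{(1)}(p)$ by Lemma~\ref{lem:homo1} and rewrite $S_n^{(2)}(p)$, via a partial‑fraction identity, in terms of truncated multiple harmonic sums; and finally evaluate those through Corollary~\ref{cor:homo} and Lemma~\ref{lem:Ub}.

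\textbf{Step 1 (range splitting).} If $l_1+\dots+l_n=2p$ with each $l_i\in\calP_p$, then since $n<p$ at most one index can exceed $p$, for two of them would already sum to more than $2p$. The tuples with every $l_i<p$ contribute precisely $S_n^{(2)}(p)=\sum_{l_1+\dots+l_n=2p,\ 0<l_i<p}1/(l_1\cdots l_n)$. If exactly one index is large, say $l_n=p+m$ with $0<m<p$, then $m+l_1+\dots+l_{n-1}=p$ with $l_1,\dots,l_{n-1},m\in\calP_p$; since $1/(p+m)\equiv 1/m\pmod p$, summing over the $n$ choices of the large index contributes $\equiv nR_n^{(1)}(p)\pmod p$. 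Hence
$$R_n^{(2)}(p)\equiv S_n^{(2)}(p)+nR_n^{(1)}(p)\pmod p,$$
and the odd case of Lemma~\ref{lem:homo1} gives $nR_n^{(1)}(p)\equiv -n!\,B_{p-n}\pmod p$. So it suffices to show $S_n^{(2)}(p)\equiv\frac{(n-1)\,(n-1)!}{2}\,B_{p-n}\pmod p$.

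\textbf{Step 2 (reduction of $S_n^{(2)}(p)$).} Using $\frac1{l_1\cdots l_n}=\frac1{2p}\sum_{i=1}^n\frac1{\prod_{j\ne i}l_j}$ (legitimate because $\sum l_i=2p$) together with the symmetry of the index set, one obtains $S_n^{(2)}(p)=\frac n{2p}\,W$, where $W$ is the sum of $1/(l_1\cdots l_{n-1})$ over all $(l_1,\dots,l_{n-1})$ with $0<l_j<p$ and $p<l_1+\dots+l_{n-1}<2p$; thus $W$ is needed modulo $p^2$. I would partition the tuples $(l_1,\dots,l_{n-1})$ with $0<l_j<p$ according to whether $\sum l_j\le p$, $p<\sum l_j<2p$, or $\sum l_j\ge2p$. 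The sum over all such tuples is $H(1)^{\,n-1}$, which is $0$ modulo $p^2$ by Wolstenholme. The ``$\le p$'' block is $R_{n-1}^{(1)}(p)+(n-1)!\,H(\{1\}^{n-1})$, using the elementary identity $\sum_{l_1+\dots+l_k\le p-1}1/(l_1\cdots l_k)=k!\,H(\{1\}^k)$ (both sides equal $k!/(p-1)!$ times the degree‑$(p-1-k)$ elementary symmetric function of $1,\dots,p-1$); Corollary~\ref{cor:homo} and Lemma~\ref{lem:homo1} then give this block modulo $p^2$ in terms of $B_{p-n}$. The ``$\ge2p$'' block becomes, after the substitution $l_j\mapsto p-l_j$, the sum of $\prod_j 1/(p-l_j)$ over $0<l_j<p$ with $l_1+\dots+l_{n-1}\le(n-3)p$; expanding $\frac1{p-l_j}=\frac1{l_j}+\frac p{l_j^2}+\cdots$, the leading term is again a truncated sum of the previous type (reducible, by the same reflection, to the other two blocks) and the next term yields truncated multiple harmonic sums with one index of weight $2$.

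\textbf{Step 3 (assembly, and the main difficulty).} The main obstacle is exactly this last family: truncated multiple harmonic sums in which one index carries weight $2$ (arising from $\frac1{p+m}\equiv\frac1m-\frac p{m^2}\pmod{p^2}$), together with the bookkeeping of which auxiliary sums must be known modulo $p^2$ and which modulo $p^3$. These are brought under control by one further application of $l_j\mapsto p-l_j$, after which every term collapses to multiple harmonic sums $H(\bfs)$ of total weight at most $n-1$, all evaluable modulo $p$ via Corollary~\ref{cor:homo}, Lemma~\ref{lem:Ub}, and \cite{Zhao2008a}; since the paper only needs $n=7$, this amounts to a finite computation. Collecting the contributions and simplifying the Bernoulli‑number coefficient yields $S_n^{(2)}(p)\equiv\frac{(n-1)(n-1)!}{2}B_{p-n}\pmod p$, hence $R_n^{(2)}(p)\equiv\big(\tfrac{(n-1)(n-1)!}{2}-n!\big)B_{p-n}=-\tfrac{n+1}{2}(n-1)!\,B_{p-n}\pmod p$, as claimed. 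As a consistency check, for $n=3$ the ``$\ge2p$'' block is empty, $W\equiv H(2)-\tfrac2p H(1)\equiv\tfrac43 p\,B_{p-3}\pmod{p^2}$, so $S_3^{(2)}(p)\equiv 2B_{p-3}$ and, using \eqref{equ:BaseCongruence}, $R_3^{(2)}(p)\equiv -4B_{p-3}\pmod p$.
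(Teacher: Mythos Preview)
Your Steps~1 and~2 are correct and set up a genuinely different route from the paper's.  The paper never splits off $S_n^{(2)}(p)$ first; instead it passes immediately to the \emph{ordered} partial sums $u_i=l_1+\dots+l_i$, rewriting
\[
R_n^{(2)}(p)=\frac{n!}{2p}\sum_{0<u_1<\dots<u_{n-1}<2p}\frac{1}{u_1\cdots u_{n-1}}
-\frac{n!}{p^2}H(\{1\}^{n-2}),
\]
and then evaluates the right-hand side directly via the $U_b$ machinery (Lemma~\ref{lem:Ub} and its $b=2$ extension).  Your approach instead stays with the \emph{unordered} variables $l_j\in(0,p)$ and relies on the reflection $l_j\mapsto p-l_j$ to control the region $\sum l_j\ge 2p$.

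The gap is in Step~3, precisely in the handling of that ``$\ge 2p$'' block for general odd $n\ge 5$.  After one reflection the leading term is the sum of $1/\prod l_j$ over the region $\sum l_j\le (n-3)p$, but this region is not, as you claim, ``reducible by the same reflection to the other two blocks.''  For $n=5$ it equals $(\text{$\le p$ block})+W+(\text{$=2p$ slice})$, so you pick up the new unknown $S_4^{(2)}(p)$ that must itself be known modulo $p^2$; for $n=7$ the region $\sum l_j\le 4p$ introduces further strips $(2p,3p)$, $(3p,4p)$ and the slices $\sum l_j=2p,3p,4p$, none of which is expressible in terms of $W$ and the ``$\le p$'' block alone.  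A second reflection sends ``$\le (n-3)p$'' back to ``$\ge 2p$,'' so the procedure is circular rather than terminating.  Your fallback ``for $n=7$ this is a finite computation'' does not rescue the argument, since the lemma is stated (and used in the paper only as a specialization of the general statement) for all odd $n$.  The paper avoids this difficulty entirely: by working with indices ranging up to $2p$ and invoking the closed evaluation of $U_2(\alpha_1,\dots,\alpha_k)$, no region-splitting in the $l_j$ variables is ever needed.
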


\begin{proof} We have
\begin{alignat*}{4}
&\ \sum_{\substack{l_1+\dots+l_n=2p\\ l_1,\dots,l_n\in \calP_p }} \frac{1}{l_1\dots l_n} \\
=&\ \sum_{l_1+\dots+l_n=2p} \frac{1}{l_1\dots l_n}
-\frac{n}{p}\sum_{l_1+\dots+l_{n-1}=p}\frac{1}{l_1\dots l_{n-1}} \\
=&\ \frac{n!}{2p}\sum_{0<u_1<\dots<u_{n-1}<2p} \frac{1}{u_1\dots u_{n-1}}
-\frac{n!}{p^2}\sum_{0<u_1<\dots<u_{n-2}<p}\frac{1}{u_1\dots u_{n-2}} \\
=&\ \frac{n!}{2p}H^{(p)}_{2p-1}(\{1\}^{n-1})
+ \frac{n!}{2p^2}\sum_{j=1}^{n-1}
\sum_{\substack{0<u_1<\dots<u_{j-1}<p\\
        p<u_{j+1}<\dots<u_{n-1}<2p}}\frac{1}{u_1\dots u_{j-1}u_{j+1}\dots u_{n-1}}
- & \frac{n!}{p^2}H(\{1\}^{n-2}) &   \\
\equiv&\ \frac{n!}{2p}\cdot \frac{U_{2}(\{1\}^{n-1})}{(n-1)!}
+\frac{n!}{2p^2}\left(2H(\{1\}^{n-2})-p\frac{U_1(2,\{1\}^{n-3})}{(n-3)!}\right) \\
+&\  \frac{n!}{2p^2}\sum_{j=2}^{n-2} H(\{1\}^{j-1})
\left(H(\{1\}^{n-j-1})-p\frac{U_1(2,\{1\}^{n-j-2})}{(n-j-2)!}\right) -\frac{n!}{p^2}H(\{1\}^{n-2}) &\pmod{p} &\  \\
\equiv&\ \frac{n}{2p}U_{2}(\{1\}^{n-1}) -\frac{n!}{2p}\frac{U_1(2,\{1\}^{n-3})}{(n-3)!} &\pmod{p}&\ \\
\equiv&\  -\frac{n}{p}(n-2)!\frac{n-1}{n} B_{p-n}p
   -(-1)^{n-3}\frac{n!}{2p}(n-3)! \frac{n-1}{n (n-3)!} B_{p-n}p  &\pmod{p} &\  \\
\equiv&\  -\frac{n+1}{2} (n-1)! B_{p-n}  &\pmod{p}  &, \
\end{alignat*}
as desired.
\end{proof}

\begin{cor} \label{cor:S_n(2)(p)forOddn}
Let $n$ be an odd positive integer with $n\ge 5$. Then for all prime $p>n$, we have
\begin{equation*}
S_n^{(2)}(p)\equiv \frac{n-1}{2} \cdot (n-1)!B_{p-n} \pmod{p}.
\end{equation*}
\end{cor}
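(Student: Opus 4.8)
The plan is to deduce the corollary from Lemma~\ref{lem:2pGeneral} by controlling the finitely many tuples that are counted by $R_n^{(2)}(p)$ but not by $S_n^{(2)}(p)$, namely those $(l_1,\dots,l_n)$ with $l_1+\dots+l_n=2p$ and all $l_i\in\calP_p$ for which some $l_i\ge p$. Since $p\nmid l_i$, such an index satisfies $l_i>p$, and then $\sum_{j\ne i}l_j=2p-l_i<p$, so at most one index can reach $p$; when one does, the remaining $n-1$ indices are automatically positive, strictly less than $p$, and coprime to $p$. Partitioning the extra tuples according to which index is large and using the symmetry of the summand, we get
\begin{equation*}
R_n^{(2)}(p)-S_n^{(2)}(p)=n\sum_{\substack{l_1+\dots+l_{n-1}+l_n=2p\\ l_1,\dots,l_{n-1}>0,\ l_n>p}}\frac{1}{l_1\cdots l_n}.
\end{equation*}

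Next I would reparametrise the large index by $l_n=p+k$ with $1\le k\le p-1$ (coprimality of $l_n$ to $p$ is then automatic), so that $l_1+\dots+l_{n-1}=p-k$ ranges over all compositions of $p-k$ into $n-1$ positive parts:
\begin{equation*}
R_n^{(2)}(p)-S_n^{(2)}(p)=n\sum_{k=1}^{p-1}\frac{1}{p+k}\sum_{\substack{l_1+\dots+l_{n-1}=p-k\\ l_1,\dots,l_{n-1}>0}}\frac{1}{l_1\cdots l_{n-1}}.
\end{equation*}
Because $\frac{1}{p+k}\equiv\frac{1}{k}\pmod p$, and because relabelling $k$ as $l_n$ identifies $\sum_{k}\frac1k\sum_{l_1+\dots+l_{n-1}=p-k}(l_1\cdots l_{n-1})^{-1}$ with $\sum_{l_1+\dots+l_n=p,\ l_i>0}(l_1\cdots l_n)^{-1}=R_n^{(1)}(p)$, we obtain
\begin{equation*}
S_n^{(2)}(p)\equiv R_n^{(2)}(p)-n\,R_n^{(1)}(p)\pmod p.
\end{equation*}

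Finally I would insert the two evaluations already available. Lemma~\ref{lem:2pGeneral} gives $R_n^{(2)}(p)\equiv-\frac{n+1}{2}(n-1)!\,B_{p-n}\pmod p$ and Lemma~\ref{lem:homo1} gives $R_n^{(1)}(p)\equiv-(n-1)!\,B_{p-n}\pmod p$; the hypothesis $p>n+1$ needed by Lemma~\ref{lem:homo1} holds since $n$ odd forces $n+1$ to be even and hence composite, so the prime $p>n$ actually satisfies $p\ge n+2$. Substituting,
\begin{equation*}
S_n^{(2)}(p)\equiv\left(-\frac{n+1}{2}+n\right)(n-1)!\,B_{p-n}=\frac{n-1}{2}\,(n-1)!\,B_{p-n}\pmod p,
\end{equation*}
as desired. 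Nothing here is deep; the only steps requiring care are the combinatorial observation that at most one summand can be $\ge p$ (which is what makes the symmetric decomposition exact) and the harmless replacement $\frac{1}{p+k}\equiv\frac{1}{k}\pmod p$ used to recognise the defect as $n\,R_n^{(1)}(p)$.
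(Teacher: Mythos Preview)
Your proof is correct and follows essentially the same approach as the paper: both split $R_n^{(2)}(p)$ into $S_n^{(2)}(p)$ plus the contribution from tuples with one entry exceeding $p$, reduce that contribution modulo $p$ to $n\,R_n^{(1)}(p)$ (equivalently $n\,S_n^{(1)}(p)$) via $\frac{1}{p+k}\equiv\frac{1}{k}$, and then substitute the values from Lemma~\ref{lem:2pGeneral} and Lemma~\ref{lem:homo1}. Your explicit justification that $p>n$ forces $p>n+1$ (since $n+1$ is even) is a nice touch that the paper leaves implicit.
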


\begin{proof}
We observe that
\begin{equation*}
\sum_{\substack{l_1+\dots+l_n=2p\\ l_j\in \calP_p \, \forall j }} \frac{1}{l_1\dots l_n}\equiv \sum_{\substack{l_1+\dots+l_n=2p\\ l_1,\dots,l_n <p }} \frac{1}{l_1\dots l_n} +n \sum_{\substack{l_1+\dots+l_n=p\\ l_1,\dots,l_n <p }} \frac{1}{(l_1+p) l_2 \dots l_n} \pmod{p}.
\end{equation*}
By Lemma~\ref{lem:homo1}, we have $S_n^{(1)}(p)\equiv -(n-1)! B_{p-n}\pmod{p}$. So we deduce
\begin{equation*}
S_n^{(2)}(p)\equiv \sum_{\substack{l_1+\dots+l_n=2p\\  l_j \in \calP_p \, \forall j}}
    \frac{1}{l_1 l_2 \dots l_n}- nS_n^{(1)}(p)\equiv \frac{n-1}{2} \cdot (n-1)!B_{p-n} \pmod{p}.
\end{equation*}
\end{proof}

\begin{lem}\label{lem:3pGeneral}
Let $n\ge 3$ be an odd positive integer.
Then for all prime $p\ge \max\{n,5\}$, we have
\begin{eqnarray*}
&&R_n^{(3)}(p)=\sum_{\substack{l_1+\dots+l_n=3p\\ l_1,\dots,l_n\in \calP_p }} \frac{1}{l_1\dots l_n} \\
&\equiv & -\frac{1}{n}{n+2 \choose 3}\cdot (n-1)!B_{p-n}-\frac{n!}{6}\sum\limits_{\substack{a+b+c=\frac{n-3}{2} \\a,b,c \ge 1}} \frac{B_{p-2a-1}B_{p-2b-1}B_{p-2c-1}}{(2a+1)(2b+1)(2c+1)}  \pmod{p}.
\end{eqnarray*}
\end{lem}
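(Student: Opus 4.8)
The plan is to mimic the proof of Lemma~\ref{lem:2pGeneral}, one power of $p$ deeper. Since $l_1+\dots+l_n=3p$ with all $l_i\ge 1$, the only multiples of $p$ that can occur among the $l_i$ are $p$ and $2p$, and (when $n\ge 5$) at most two of them can be divisible by $p$, in which case both equal $p$; for $n=3$ there is the further possibility $l_1=l_2=l_3=p$, producing an extra inclusion--exclusion term that cancels in the end. Running inclusion--exclusion over the set of $i$ with $p\mid l_i$ and then using the elementary identity $\sum_{m_1+\dots+m_k=N,\,m_i\ge 1}\frac{1}{m_1\cdots m_k}=\frac{k!}{N}H_{N-1}(\{1\}^{k-1})$ (proved by comparison with Stirling numbers of the first kind), I would obtain
\begin{multline*}
R_n^{(3)}(p)=\frac{n!}{3p}\sum_{0<u_1<\dots<u_{n-1}<3p}\frac{1}{u_1\cdots u_{n-1}}\\
-\frac{n!}{2p^2}\sum_{0<u_1<\dots<u_{n-2}<2p}\frac{1}{u_1\cdots u_{n-2}}-\frac{n!}{2p^2}H(\{1\}^{n-2})+\frac{n!}{2p^3}H(\{1\}^{n-3}).
\end{multline*}

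Next I would, in each of the two ordered sums, separate the terms in which some $u_i$ is a multiple of $p$ (necessarily $p$ or $2p$), pulling out the corresponding factor $1/p$ or $1/(2p)$; the remaining variables lie strictly between consecutive multiples of $p$ and so are coprime to $p$. The ``no multiple of $p$'' parts become $H^{(p)}_{3p-1}(\{1\}^{n-1})=U_3(\{1\}^{n-1})/(n-1)!$ and $H^{(p)}_{2p-1}(\{1\}^{n-2})=U_2(\{1\}^{n-2})/(n-2)!$. For every block $(cp,(c+1)p)$ with $c\in\{1,2\}$ I substitute $u=cp+v$ and use $\frac{1}{cp+v}\equiv\frac1v-\frac{cp}{v^2}+\frac{c^2p^2}{v^3}\pmod{p^3}$, keeping only the orders that the power of $p$ in front requires (mod $p$ behind $1/p$, mod $p^2$ behind $1/p^2$, mod $p^3$ behind $1/p^3$). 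A block of size $k$ then contributes $H(\{1\}^k)$ to leading order, with first correction $\mp cp\cdot U_1(2,\{1\}^{k-1})/(k-1)!$, and by Corollary~\ref{cor:homo} and the generalization of Lemma~\ref{lem:Ub} each factor so produced equals a Bernoulli number times a power of $p$. In particular the ``double-boundary'' part of the $3p$-sum, where one $u_i=p$ and another $u_i=2p$, becomes $\frac{n!}{6p^3}$ times a sum of triple products $H(\{1\}^{s_1})\cdot(\text{block of size }s_2)\cdot(\text{block of size }s_3)$ with $s_1+s_2+s_3=n-3$, one block in each of $(0,p),(p,2p),(2p,3p)$.

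Configurations with an empty block carry factors $1/p$ or $1/p^2$, and these \emph{must}, and do, cancel. The $1/p^2$-terms (all of the form $(\text{const}/p^3)\,H(\{1\}^{n-3})$) cancel among the explicit $\frac{n!}{2p^3}H(\{1\}^{n-3})$, the part of the $2p$-sum in which $u_i=p$ and a single block is non-empty, and the ``two empty blocks'' part of the double-boundary piece; the $1/p$-terms (products $B_{p-2a-1}B_{p-2b-1}$) cancel between the ``one empty block, the other two of even size'' part of the double-boundary piece and the part of the $2p$-sum in which $u_i=p$ and the remaining $n-3$ variables form two even blocks below and above $p$. The delicate point is that these cancellations must be arranged \emph{before} invoking any Bernoulli estimate beyond mod-$p^3$ precision; once this is done the remaining negative powers of $p$ are gone.

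What survives modulo $p$ is then (a) single sums of type $U_3(\{1\}^{n-1})$, $U_2(\{1\}^\bullet)$, $U_1(2,\{1\}^\bullet)$, each evaluated by the generalization of Lemma~\ref{lem:Ub} to a rational multiple of $B_{p-n}$; summing them all gives $-\frac1n\binom{n+2}{3}(n-1)!\,B_{p-n}$; and (b) the part of the double-boundary piece with all three blocks non-empty of \emph{even} sizes $2a,2b,2c$, $a+b+c=(n-3)/2$, $a,b,c\ge1$, where $H(\{1\}^{2a})\equiv -\frac{B_{p-2a-1}}{2a+1}p\pmod{p^2}$ in each factor by Corollary~\ref{cor:homo}, contributing $-\frac{n!}{6}\sum_{a+b+c=(n-3)/2,\ a,b,c\ge1}\frac{B_{p-2a-1}B_{p-2b-1}B_{p-2c-1}}{(2a+1)(2b+1)(2c+1)}$. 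Every other configuration vanishes: a product of two or more factors $H(\{1\}^\bullet)$ whose block sizes add up to $n-2$ or $n-3$ (both odd, since $n$ is odd) must contain a factor of odd size, which is $O(p^2)$, killing the term --- and this parity count is the only place the hypothesis ``$n$ odd'' is used. The real obstacle throughout is the bookkeeping described in the previous paragraph: tracking the many terms with $1/p,1/p^2,1/p^3$ coming from the three moduli $N=p,2p,3p$ and checking that they cancel, and then assembling the various $B_{p-n}$ pieces into the stated binomial coefficient.
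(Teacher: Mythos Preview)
Your approach is correct in outline and genuinely different from the paper's. You first strip off the coprimality condition on the $l_i$ by inclusion--exclusion, reducing $R_n^{(3)}(p)$ to a linear combination of the \emph{unconstrained} ordered sums $H_{3p-1}(\{1\}^{n-1})$, $H_{2p-1}(\{1\}^{n-2})$, $H(\{1\}^{n-2})$, $H(\{1\}^{n-3})$ with coefficients $1/p,1/p^2,1/p^3$, and then split each according to which $u_i$ equal $p$ or $2p$. The paper instead keeps the condition $l_i\in\calP_p$ as the \emph{difference} constraints $u_{i}-u_{i-1}\in\calP_p$ on the partial sums, splits according to which $u_j\in\{p,2p\}$ (only the interior indices $2\le j\le n-2$ can be), shows each of those side pieces is $\equiv 0\pmod{p^2}$ on its own, and then handles the main piece (all $u_j\in\calP_p$) by a second inclusion--exclusion on the difference constraints, producing the auxiliary sums $D_j,T_j,T_{j,k},W_j$. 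The triple-Bernoulli term appears in both arguments from the same source: the configuration $u_i=p$, $u_j=2p$ with three nonempty even blocks. What the paper's route buys is modularity --- every piece is evaluated independently with no cancellation across pieces --- whereas your route has a cleaner starting identity but forces you to match singular $1/p^k$ contributions coming from different moduli against one another, which is exactly the bookkeeping you flag as the real obstacle.

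Two small points. First, a slip: you write that $n-2$ and $n-3$ are ``both odd, since $n$ is odd'', but $n-3$ is even; your argument actually uses this correctly (the even-even block pairs summing to $n-3$ are precisely the $1/p$ terms you cancel separately, and the even-even-even triples give the surviving Bernoulli product), so this is only a wording error. Second, the leading-order cancellations you describe for the $1/p^2$ and $1/p$ pieces are right, but each of those pieces carries first-order corrections from the expansion of $1/(cp+v)$ (e.g.\ the block $(p,2p)^{n-3}$ contributes $H(\{1\}^{n-3})-p\,U_1(2,\{1\}^{n-4})/(n-4)!+\cdots$), and those corrections feed back into the $B_{p-n}$ coefficient; they do not cancel but must be collected along with the $U_3$, $U_2$, $U_1(2,\ldots)$ contributions to produce $-\tfrac1n\binom{n+2}{3}(n-1)!$. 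That collection is straightforward but not shorter than the paper's $D_j,T_j,T_{j,k},W_j$ tally.
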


\begin{proof}
Let $\nu=n-1$ throughout the proof. Let $u_i=l_1+\dots+l_i$, $1 \le i \le \nu$. We have
\begin{equation}\label{starteq}
\sum_{\substack{l_1+\dots+l_n=3p\\ l_1,\dots,l_n \in \calP_p }} \frac{1}{l_1\dots l_n} =\frac{n!}{3p}
\sum_{\substack{1 \le u_1< \dots <u_{n-1}<3p \\ u_1,u_2-u_1,\dots, u_{\nu}-u_{n-2},u_\nu \in \calP_p }} \frac{1}{u_1\dots u_\nu}.
\end{equation}
Evidently
\begin{align} \notag
&\ \ \ \sum_{\substack{1 \le u_1< \dots <u_\nu<3p \\ u_1,u_2-u_1,\dots,u_\nu-u_{n-2},u_\nu \in \calP_p}}
\frac{1}{u_1\dots u_\nu} \\
&\ =  \sum_{\substack{1\le u_1<\dots<u_\nu<3p \\ u_1,u_2,\dots,u_\nu \in \calP_p \\ u_2-u_1,\dots,u_\nu-u_{n-2} \in \calP_p}} \frac{1}{u_1\dots u_\nu}
+\sum_{i=2}^{n-4}\sum_{j=i+2}^{n-2} \sum_{\substack{1\le u_1<\dots<u_\nu<3p, \: u_i=p,u_j=2p
 \\ \forall k\ne i,k\ne j, u_k,u_2-u_1,\dots, u_{\nu}-u_{n-2} \in \calP_p }} \frac{1}{u_1\dots u_\nu}
                                                                                \notag  \\
&\ +\sum_{j=2}^{n-2} \sum_{\substack{1\le u_1<\dots<u_\nu<3p, \: u_j=p
        \\ \forall k\ne j, u_k,u_2-u_1,\dots, u_{\nu}-u_{n-2} \in \calP_p }} \frac{1}{u_1\dots u_\nu}
+\sum_{j=2}^{n-2} \sum_{\substack{1\le u_1<\dots<u_\nu<3p, \: u_j=2p
        \\ \forall k\ne j, u_k,u_2-u_1,\dots, u_{\nu}-u_{n-2} \in \calP_p }} \frac{1}{u_1\dots u_\nu}.
        \label{equ:3pcaseSomeArePMultiple}
\end{align}
Now we deal with the sums in (\ref{equ:3pcaseSomeArePMultiple}) one by one.  For $2\le j\le n-2$,
\begin{align}\label{sum1}
\ &\sum_{\substack{1\le u_1<\dots<u_\nu<3p, \: u_j=p
        \\ \forall k\ne j, u_k,u_2-u_1,\dots, u_{\nu}-u_{n-2} \in \calP_p }} \frac{1}{u_1\dots u_\nu} \nonumber \\
=&\ \frac{1}{p} \sum_{1 \le u_1<\cdots<u_{j-1}<p} \frac{1}{u_1\cdots u_{j-1}}
\sum_{\substack{p<u_{j+1}<\cdots<u_\nu<3p,
        \\ \forall k>j, u_k,u_{j+2}-u_{j+1},\dots, u_{\nu}-u_{n-2} \in \calP_p }}
         \frac{1}{u_{j+1}\cdots u_\nu} \nonumber \\
=&\ \frac{1}{p} H(\{1\}^{j-1})
\sum_{\substack{0<u_1<\cdots<u_{\nu-j}<2p\\ \forall k, u_k,u_2-u_1,\dots, u_{\nu-j}-u_{\nu-j-1} \in \calP_p }}
             \frac{1}{(u_1+p)\cdots (u_{\nu-j}+p)} \nonumber \\
\equiv&\  \frac{1}{p} H(\{1\}^{j-1})\bigg( H^{(p)}_{2p-1}(\{1\}^{n-j-1})
    -p\sum_{i=0}^{n-j-2} H^{(p)}_{2p-1}(\{1\}^i,2,\{1\}^{n-i-j-2}) \nonumber \\
&\  \hskip2cm
-\sum_{i=1}^{\nu-j-2} \sum_{0<u_1<\cdots<u_{\nu-j-1}<p}  \frac{1}{(u_1+p)\cdots(u_i+p)(u_i+2p)\cdots (u_{\nu-j-1}+2p)}  \bigg) \pmod{p^2} \nonumber \\
\equiv&\ \frac{1}{p} H(\{1\}^{j-1})\bigg(
\frac{U_2(\{1\}^{n-j-1})}{(n-j-1)!}
    -\frac{U_2(2,\{1\}^{n-j-2})}{(n-j-2)!}p-\frac{U_2(2,\{1\}^{n-j-2})}{(n-j-2)!}\nonumber  \\
&\  +p\sum_{i=1}^{\nu-j-2} H(\{1\}^{i-1},3,\{1\}^{n-i-j-2})
   +p\sum_{i=1}^{\nu-j-2}\sum_{k=0}^{i-1} H(\{1\}^k,2,\{1\}^{i-k-2},2,\{1\}^{n-i-j-2})  \nonumber \\
&\  +2p\sum_{i=1}^{\nu-j-2} H(\{1\}^{i-1},3,\{1\}^{n-i-j-2})
   +2p\sum_{i=1}^{\nu-j-2}\sum_{k=0}^{n-i-j-2} H(\{1\}^{i-1},2,\{1\}^k,2,\{1\}^{n-i-j-k-3})
 \bigg)  \nonumber \\
\equiv&\  0  \pmod{p^2}
\end{align}
by Lemma~\ref{lem:Ub} and Corollary~\ref{cor:homo} since one of $j-1$ and $n-j-1$
is even and the other is odd.

 Similarly, for $2\le j\le n-2$,
\begin{align}\label{sum2}
\ &\sum_{\substack{1\le u_1<\dots<u_\nu<3p, \: u_j=2p
        \\ \forall k<j, u_k,u_2-u_1,\dots, u_{\nu}-u_{n-2} \in \calP_p }}  \frac{1}{u_1\dots u_\nu}  \nonumber \\
=&\ \frac{1}{2p} \sum_{\substack{1 \le u_1<\cdots<u_{j-1}<2p\\
    \forall k<j, u_k,u_2-u_1,\dots, u_{j-1}-u_{j-2} \in \calP_p }} \frac{1}{u_1\cdots u_{j-1}}
\sum_{2p<u_{j+1}<\cdots<u_\nu<3p} \frac{1}{u_{j+1}\cdots u_\nu} \nonumber \\
=&\ \frac{1}{2p}\left( H^{(p)}_{2p-1}(\{1\}^{j-1})-
\sum_{i=1}^{j-2} \sum_{1 \le u_1<\cdots<u_{j-2}<p} \frac{1}{u_1\cdots u_i(u_i+p)\cdots (u_{j-2}+p)} \right)
\nonumber \\
&\  \hskip3cm \times\sum_{0<u_1<\cdots<u_{n-j-1}<p} \frac{1}{(u_1+2p)\cdots (u_{n-j-1}+2p)} \nonumber  \\
\equiv&\  \frac{1}{2p} \bigg(H^{(p)}_{2p-1}(\{1\}^{j-1})
-\sum_{i=1}^{j-2} H(\{1\}^{i-1},2,\{1\}^{j-i-2})
+p\sum_{i=1}^{j-2}\sum_{k=0}^{j-i-2}  H(\{1\}^{i-1},2,\{1\}^k,2,\{1\}^{j-i-k-3}) \bigg) \nonumber \\
&\  \hskip3cm \times\bigg(H(\{1\}^{n-j-1})
    -2p\sum_{i=0}^{n-j-2} H(\{1\}^i,2,\{1\}^{n-i-j-2})\bigg) \pmod{p^2} \nonumber \\
\equiv&\  \frac{1}{2p} \left(\frac{U_2(\{1\}^{j-1})}{(j-1)!}
-\frac{U_1(\{1\}^{j-3})}{(j-3)!}+\frac{U_1(2,2,\{1\}^{j-4})}{2!(j-4)!}p \right)
\left(H(\{1\}^{n-j-1})-\frac{2U_1(2,\{1\}^{n-j-2})}{(n-j-2)!}p\right) \nonumber  \\
\equiv&\  0  \pmod{p^2}.
\end{align}
Further, for all $2\le i\le j-2\le n-4$, we obtain
\begin{align*}
&\ \sum_{\substack{1\le u_1<\dots<u_\nu<3p, \: u_i=p,u_j=2p
 \\ \forall k\ne i,k\ne j, u_k,u_2-u_1,\dots, u_{\nu}-u_{n-2} \in \calP_p }} \frac{1}{u_1\dots u_\nu}\\
=&\ \frac{1}{2p^2} \sum_{1\le u_1<\cdots<u_{i-1}<p} \frac{1}{u_1\cdots<u_{i-1}}
\sum_{p<u_{i+1}<\cdots<u_{j-1}<2p} \frac{1}{u_{i+1}\cdots u_{j-1}}
\sum_{2p<u_{j+1}<\cdots<u_\nu<3p} \frac{1}{u_{j+1}\cdots u_\nu} \\
\equiv&\ \frac{1}{2p^2} H(\{1\}^{i-1})
\Big(H(\{1\}^{j-i-1})-p\sum_{\ell=1}^{j-i-1} H(\{1\}^\ell,2,\{1\}^{j-i-\ell-2})\Big) \\
&\  \hskip3cm \times\Big(H(\{1\}^{n-j-1})-2p\sum_{\ell=1}^{n-j-1} H(\{1\}^\ell,2,\{1\}^{n-j-\ell-2})\Big) \\
\equiv&\ \frac{1}{2p^2} H(\{1\}^{i-1})H(\{1\}^{j-i-1})H(\{1\}^{n-j-1}) \\
-&\ \frac{1}{2p}H(\{1\}^{i-1})H(\{1\}^{n-j-1})\frac{U_1(2,\{1\}^{j-i-2})}{(j-i-2)!}
-\frac{1}{p}H(\{1\}^{i-1})H(\{1\}^{j-i-1})\frac{U_1(2,\{1\}^{n-j-2})}{(n-j-2)!}\\
\equiv&\  \frac{1}{2p^2} H(\{1\}^{i-1})H(\{1\}^{j-i-1})H(\{1\}^{n-j-1})  \pmod{p^2}.
\end{align*}
Thus by Corollary \ref{cor:homo} we deduce that
\begin{eqnarray}\label{addcong}
&&\sum_{i=2}^{n-4}\sum_{j=i+2}^{n-2} \sum_{\substack{1\le u_1<\dots<u_\nu<3p, \: u_i=p,u_j=2p
 \\ \forall k\ne i,k\ne j, u_k,u_2-u_1,\dots, u_{\nu}-u_{n-2} \in \calP_p }} \frac{1}{u_1\dots u_\nu}  \nonumber \\
&\equiv & \frac{1}{2p^2}\sum_{i=2}^{n-4}\sum_{j=i+2}^{n-2} H(\{1\}^{i-1})H(\{1\}^{j-i-1})H(\{1\}^{n-j-1})  \nonumber \\
&\equiv & \frac{1}{2p^2}\sum\limits_{\substack{a+b+c=n-3 \\a,b,c \ge 1}}H(\{1\}^{a})H(\{1\}^{b})H(\{1\}^{c}) \nonumber \\
&\equiv & \frac{1}{2p^2}\sum\limits_{\substack{a+b+c=\frac{n-3}{2} \\a,b,c \ge 1}} H(\{1\}^{2a})H(\{1\}^{2b})H(\{1\}^{2c}) \nonumber \\
&\equiv & -\frac{p}{2}\sum\limits_{\substack{a+b+c=\frac{n-3}{2} \\a,b,c \ge 1}} \frac{B_{p-2a-1}B_{p-2b-1}B_{p-2c-1}}{(2a+1)(2b+1)(2c+1)} \pmod{p^2}.
\end{eqnarray}

For the first sum in \eqref{equ:3pcaseSomeArePMultiple}, by the inclusion-exclusion principle,
\begin{eqnarray}\label{equ:3pcase5Ts}
&&\sum_{\substack{1\le u_1<\dots<u_\nu<3p \\ u_1,\dots,u_\nu\in \calP_p \\ u_2-u_1,\dots,u_\nu-u_{n-2} \in \calP_p}} \frac{1}{u_1\dots u_\nu} \nonumber\\
&\equiv &  \sum_{\substack{1\le u_1<\dots<u_\nu<3p \\ u_1,\dots,u_\nu\in \calP_p }} \frac{1}{u_1\dots u_\nu}
-\sum_{j=1}^{n-2} D_j-\sum_{j=1}^{n-2} T_j+\sum_{j=1}^{n-3}\sum_{k=j+2}^{n-2} T_{j,k}+\sum_{j=1}^{n-3} W_j  \nonumber \\
&\equiv & \frac{1}{(n-1)!} U_3(\{1\}^{n-1})
-\sum_{j=1}^{n-2} D_j-\sum_{j=1}^{n-2} T_j+\sum_{j=1}^{n-3}\sum_{k=j+2}^{n-2} T_{j,k}+\sum_{j=1}^{n-3} W_j  \pmod{p^2},
\end{eqnarray}
where (setting $v_{n-1}=3p$)
\begin{align*}
D_j=&\ \sum_{\substack{1 \le v_1<\dots<v_j<v_j+2p<v_{j+1}<\dots<3p \\ v_1,\dots,v_{n-2} \in \calP_p }} \frac{1}{v_1 \dots v_j(v_j+2p) v_{j+1}\dots v_{n-2}}, \\
T_j=&\ \sum_{\substack{1 \le v_1<\dots<v_j<v_j+p<v_{j+1}<\dots<3p \\ v_1,\dots,v_{n-2} \in \calP_p }} \frac{1}{v_1 \dots v_j(v_j+p) v_{j+1}\dots v_{n-2}}, \\
T_{j,k}=&\ \sum_{\substack{1 \le v_1<\cdots<v_j<v_j+p<v_{j+1}<\cdots<v_k<v_k+p<\cdots<3p \\ v_1,\cdots,v_{n-3} \in \calP_p }} \frac{1}{v_1 \cdots v_j(v_j+p) v_{j+1}\cdots v_k(v_k+p) v_{k+1} \cdots v_{n-3}}, \\
W_j=&\ \sum_{\substack{1 \le v_1<\cdots<v_j<v_j+p<v_j+2p<v_{j+1}<\cdots<3p \\ v_1,\cdots,v_{n-3} \in \calP_p }} \frac{1}{v_1 \cdots v_j(v_j+p) (v_j+2p) v_{j+1} \cdots v_{n-3}}.
\end{align*}
We have
\begin{align*}
D_j=&\ \sum_{\substack{1 \le v_1<\dots<v_{n-2}<p \\ v_1,\dots,v_{n-2} \in \calP_p }} \frac{1}{v_1 \dots v_j(v_j+2p) (v_{j+1}+2p)\dots (v_{n-2}+2p)}  \\
\equiv& \
H(\{1\}^{j-1},2,\{1\}^{n-j-2})-2p\bigg(H(\{1\}^{j-1},3,\{1\}^{n-j-2}) \\
+& \ \sum_{i=0}^{n-j-3} H(\{1\}^{j-1},2,\{1\}^i,2,\{1\}^{n-j-i-3})\bigg) \pmod{p^2}.
\end{align*}
So by Lemma~\ref{lem:Ub} we have
\begin{align*}
\sum_{j=1}^{n-2} D_j=&\ \frac{U_1(2,\{1\}^{n-3})}{(n-3)!}
-\frac{2U_1(3,\{1\}^{n-3})}{(n-3)!}p-\frac{2U_1(2,2,\{1\}^{n-4})}{(n-4)!}p
\equiv \frac{n-1}{n} B_{p-n} \cdot p  \pmod{p^2}.
\end{align*}
Similarly,
\begin{align*}
T_j=&\ \sum_{\substack{1 \le v_1<\dots<v_{n-2}<2p \\ v_1,\dots,v_{n-2} \in \calP_p }} \frac{1}{v_1 \dots v_j(v_j+p) (v_{j+1}+p)\dots (v_{n-2}+p)}  \\
\equiv& \
H^{(p)}_{2p-1}(\{1\}^{j-1},2,\{1\}^{n-j-2})-p\bigg(H^{(p)}_{2p-1}(\{1\}^{j-1},3,\{1\}^{n-j-2}) \\
+& \ \sum_{i=0}^{n-j-3} H^{(p)}_{2p-1}(\{1\}^{j-1},2,\{1\}^i,2,\{1\}^{n-j-i-3})\bigg) \pmod{p^2}.
\end{align*}
So by Lemma~\ref{lem:Ub} we have
\begin{align*}
\sum_{j=1}^{n-2} T_j=&\ \frac{U_2(2,\{1\}^{n-3})}{(n-3)!}
-\frac{U_2(3,\{1\}^{n-3})}{(n-3)!}p-\frac{U_2(2,2,\{1\}^{n-4})}{(n-4)!}p
\equiv \frac{2(n-1)}{n} B_{p-n} \cdot p  \pmod{p^2}.
\end{align*}
Moreover,
\begin{align*}
T_{j,k}=&\ \sum_{\substack{1 \le v_1<\cdots<v_{n-3}<p \\ v_1,\cdots,v_{n-3} \in \calP_p }} \frac{1}{v_1 \cdots v_j(v_j+p) (v_{j+1}+p)\cdots(v_k+p)(v_k+2p)\cdots(v_{n-3}+2p)}  \\
\equiv& \
H(\{1\}^{j-1},2,\{1\}^{k-j-1},2,\{1\}^{n-k-3})
-p\bigg(H(\{1\}^{j-1},3,\{1\}^{k-j-1},2,\{1\}^{n-k-3}) \\
+& \ \sum_{i=0}^{k-j-2} H(\{1\}^{j-1},2,\{1\}^i,2,\{1\}^{k-i-j-2},2,\{1\}^{n-k-3})
+H(\{1\}^{j-1},2,\{1\}^{k-j-1},3,\{1\}^{n-k-3})\bigg) \\
-& \ 2p\bigg(H(\{1\}^{j-1},2,\{1\}^{k-j-1},3,\{1\}^{n-k-3}) \\
+& \ \sum_{i=0}^{n-k-4} H(\{1\}^{j-1},2,\{1\}^{k-j-1},2,\{1\}^i,2,\{1\}^{n-i-k-4})\bigg)
\pmod{p^2}.
\end{align*}
Finally
\begin{align*}
W_j=&\ \sum_{\substack{1 \le v_1<\cdots<v_{n-3}<p \\ v_1,\cdots,v_{n-3} \in \calP_p }}
    \frac{1}{v_1 \cdots v_j(v_j+p)(v_j+2p)(v_{j+1}+2p)\cdots(v_{n-3}+2p)}  \\
\equiv& \
H(\{1\}^{j-1},3,\{1\}^{n-j-3})-pH(\{1\}^{j-1},4,\{1\}^{n-j-3}) \\
-& \ 2p\bigg(H(\{1\}^{j-1},4,\{1\}^{n-j-3})
+\sum_{i=0}^{n-k-4} H(\{1\}^{j-1},3,\{1\}^i,2,\{1\}^{n-i-j-4})\bigg)
\pmod{p^2}.
\end{align*}
Thus
\begin{multline*}
    \sum_{j=1}^{n-3}\sum_{k=j+1}^{n-2} T_{j,k}+\sum_{j=1}^{n-3} W_j \equiv
  \frac{U_1(2,2,\{1\}^{n-5})}{2(n-5)!}+\frac{U_1(3,\{1\}^{n-4})}{(n-4)!} \\
  -3p\left(\frac{U_1(4,\{1\}^{n-4})}{(n-4)!}+\frac{U_1(3,2,\{1\}^{n-5})}{(n-5)!}
  +\frac{U_1(\{2\}^3,\{1\}^{n-6})}{3!(n-6)!}\right) \pmod{p^2}\\
  \equiv -(n-4)!\frac{(n-1)}n \left(\frac{1}{2(n-5)!}+\frac{1}{(n-4)!}\right) B_{p-n} p
  \equiv - \frac{(n-1)(n-2)}{2n}  B_{p-n} p \pmod{p^2}.
\end{multline*}
Plugging this into \eqref{equ:3pcase5Ts}, we have
\begin{eqnarray}\label{firtsumfinalresult}
\sum_{\substack{1\le u_1<\dots<u_\nu<3p \\ u_1,\dots,u_\nu\in \calP_p \\ u_2-u_1,\dots,u_\nu-u_{n-2} \in \calP_p}} \frac{1}{u_1\dots u_\nu}
&\equiv&  -\frac{n!}{3}  \frac{(n^2+3n+2)}{2n} B_{p-n} \nonumber \\
&\equiv&  -\frac{(n+1)(n+2)}{6} (n-1)! B_{p-n} \pmod{p}
\end{eqnarray}
by Lemma~\ref{lem:Ub} again.

Now plugging (\ref{sum1}), (\ref{sum2}), (\ref{addcong}) and (\ref{firtsumfinalresult}) into (\ref{equ:3pcaseSomeArePMultiple}), and then combining with (\ref{starteq}), we get the desired result.
\end{proof}

\begin{cor} \label{cor:S_n(3)(p)forOddn}
Let $n\ge 3$ be an odd positive integer.
Then for all prime $p\ge \max\{n,5\}$, we have
\begin{equation*}
S_n^{(3)}(p)\equiv -\frac1n \binom{n}{3}\cdot (n-1)!B_{p-n}-\frac{n!}{6}\sum\limits_{\substack{a+b+c=\frac{n-3}{2} \\a,b,c \ge 1}} \frac{B_{p-2a-1}B_{p-2b-1}B_{p-2c-1}}{(2a+1)(2b+1)(2c+1)}  \pmod{p}.
\end{equation*}
\end{cor}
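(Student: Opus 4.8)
The plan is to mirror the proof of Corollary~\ref{cor:S_n(2)(p)forOddn}: express $S_n^{(3)}(p)$ in terms of the unrestricted sum $R_n^{(3)}(p)$ evaluated in Lemma~\ref{lem:3pGeneral} together with the lower-level quantities $S_n^{(2)}(p)$ and $S_n^{(1)}(p)$, which are already pinned down by Corollary~\ref{cor:S_n(2)(p)forOddn} and Lemma~\ref{lem:homo1}. The only new feature compared with the $m=2$ case is combinatorial: with the indices summing to $3p$ rather than $2p$, more configurations of indices can straddle multiples of $p$.

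First I would classify the tuples counted by $R_n^{(3)}(p)$. Given $l_1+\dots+l_n=3p$ with each $l_i\in\calP_p$, write $l_i=k_ip+l_i'$ with $k_i=\lfloor l_i/p\rfloor$ and $1\le l_i'\le p-1$. Then $\sum_i l_i'=(3-\sum_i k_i)p$, and since the left side is positive this forces $\sum_i k_i\le 2$; as each $k_i\le\sum_i k_i$, the only possibilities are: (a) all $k_i=0$; (b) exactly one $k_i=1$; (c) exactly two $k_i=1$; (d) exactly one $k_i=2$. By symmetry in the $n$ indices, these four classes contribute, respectively, $S_n^{(3)}(p)$; then $n$ copies of a sum $A$ in which one index equals $l_1'+p$ with $l_1'+l_2+\dots+l_n=2p$ and all summands lying in $\calP_p\cap[1,p-1]$; then $\binom n2$ copies of a sum $C$ in which two indices equal $l_1'+p$ and $l_2'+p$, the residues summing to $p$; and finally $n$ copies of a sum $B$ in which one index equals $l_1'+2p$, the residues summing to $p$.

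Next, reducing modulo $p$ via $\frac1{l'+kp}\equiv\frac1{l'}\pmod p$ gives $A\equiv S_n^{(2)}(p)$ and $B\equiv C\equiv S_n^{(1)}(p)\pmod p$, so that
\begin{equation*}
R_n^{(3)}(p)\equiv S_n^{(3)}(p)+nS_n^{(2)}(p)+\Big(n+\tbinom n2\Big)S_n^{(1)}(p)\pmod p.
\end{equation*}
Solving for $S_n^{(3)}(p)$ and substituting $R_n^{(3)}(p)$ from Lemma~\ref{lem:3pGeneral}, $S_n^{(2)}(p)\equiv\tfrac{n-1}2(n-1)!B_{p-n}$ from Corollary~\ref{cor:S_n(2)(p)forOddn}, and $S_n^{(1)}(p)\equiv-(n-1)!B_{p-n}$ from Lemma~\ref{lem:homo1}, the triple-Bernoulli term of Lemma~\ref{lem:3pGeneral} carries over unchanged, while the coefficient of $(n-1)!B_{p-n}$ becomes $-\tfrac1n\binom{n+2}3-n\cdot\tfrac{n-1}2+n+\binom n2$, which collapses to $-\tfrac{(n-1)(n-2)}6=-\tfrac1n\binom n3$, giving the asserted congruence.

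The only real work is in the classification step — enumerating the straddling patterns, ruling out $\sum_i k_i\ge 3$, and attaching the correct multiplicities $1,\,n,\,\binom n2,\,n$; both that and the closing arithmetic are routine but easy to botch. One small caveat: Corollary~\ref{cor:S_n(2)(p)forOddn} is stated only for $n\ge 5$, whereas the present corollary allows $n=3$; in that case the same argument (or a direct computation from Lemma~\ref{lem:2pGeneral} and Lemma~\ref{lem:homo1}) yields $S_3^{(2)}(p)\equiv 2B_{p-3}$, so the conclusion persists.
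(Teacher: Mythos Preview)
Your proposal is correct and follows essentially the same route as the paper: decompose $R_n^{(3)}(p)$ according to how many indices exceed $p$ (your cases (a)--(d) are exactly the four terms the paper writes out), obtain $R_n^{(3)}(p)\equiv S_n^{(3)}(p)+nS_n^{(2)}(p)+\binom{n+1}{2}S_n^{(1)}(p)\pmod p$, and then substitute Lemma~\ref{lem:3pGeneral}, Corollary~\ref{cor:S_n(2)(p)forOddn}, and Lemma~\ref{lem:homo1}. Your observation that Corollary~\ref{cor:S_n(2)(p)forOddn} is only stated for $n\ge 5$ is a fair point that the paper glosses over.
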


\begin{proof}
We observe that
\begin{align*}
\sum_{\substack{l_1+\dots+l_n=3p\\ l_j\in \calP_p \, \forall j }} \frac{1}{l_1\dots l_n}
\equiv &\ \sum_{\substack{l_1+\dots+l_n=3p\\ l_j<p,\, l_j \in \calP_p \, \forall j }} \frac{1}{l_1\dots l_n}
+\binom{n}{2} \sum_{\substack{l_1+\dots+l_n=p\\ l_1,\dots,l_n <p }} \frac{1}{(l_1+p)(l_2+p)l_3 \dots l_n}  \\
+&\ n \sum_{\substack{l_1+\dots+l_n=p\\ l_1,\dots,l_n <p }} \frac{1}{(l_1+2p) l_2 \dots l_n}
+n \sum_{\substack{l_1+\dots+l_n=2p\\ l_1,\dots,l_n <p }} \frac{1}{(l_1+p) l_2 \dots l_n} \pmod{p}.
\end{align*}
So we deduce
\begin{alignat*}{3}
S_n^{(3)}(p) &\, \equiv \sum_{\substack{l_1+\dots+l_n=3p\\  l_j \in \calP_p \, \forall j}} \frac{1}{l_1 \dots l_n}
- \binom{n+1}{2}S_n^{(1)}(p)- n S_n^{(2)}(p)    \\
&\, \equiv -\frac1n \binom{n}{3}\cdot (n-1)!B_{p-n}-\frac{n!}{6}\sum\limits_{\substack{a+b+c=\frac{n-3}{2} \\a,b,c \ge 1}} \frac{B_{p-2a-1}B_{p-2b-1}B_{p-2c-1}}{(2a+1)(2b+1)(2c+1)}     \pmod{p}
\end{alignat*}
by Lemma~\ref{lem:3pGeneral}, since $S_n^{(1)}(p)\equiv -(n-1)! B_{p-n}\pmod{p}$
by Lemma~\ref{lem:homo1} and $S_n^{(2)}(p)\equiv -\frac{n-1}2 (n-1)! B_{p-n}\pmod{p}$ by
Corollary~\ref{cor:S_n(2)(p)forOddn}.
\end{proof}


\section{Proof of the main theorem}

First, we prove a special case of Theorem~\ref{thm:main}.

\begin{prop}\label{prop:specialCase}
For all $r\ge 1$ and prime $p>7$ we have
\begin{equation*}
    S_7^{(1)}(p^{r+1})\equiv -\frac{7!}{10} B_{p-7} p^r \pmod{p^{r+1}}.
\end{equation*}
\end{prop}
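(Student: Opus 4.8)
The plan is to induct on $r$: the base case $r=1$ will come from the recursion in Lemma~\ref{lem:recurrence}(ii) together with the already available mod $p$ values of $S_7^{(1)}(p)$, $S_7^{(2)}(p)$, $S_7^{(3)}(p)$, while the inductive step will use the stabilization congruence \eqref{equ:kInductionStep}, which I also need to establish here. The key preliminary observation is this. Since $p\mid C^{(m)}_{a,p}(7)$ by Lemma~\ref{lem:Cn(m)a}, a straightforward induction on $t$ via Lemma~\ref{lem:recurrence}(ii) shows $p^{t-1}\mid S_7^{(a)}(p^{t})$ for every $a$ and every $t\ge 1$. Consequently, in Lemma~\ref{lem:recurrence}(ii) only the residue of $C^{(m)}_{a,p}(7)$ modulo $p^{2}$ matters, so Lemma~\ref{lem:Cn(m)a} yields
\begin{equation*}
S_7^{(m)}(p^{t+1})\equiv(-1)^{m-1}\binom{5}{m-1}\,p\sum_{a=1}^{6}\gam_7(a)\,S_7^{(a)}(p^{t})\pmod{p^{t+1}},\qquad t\ge 1;
\end{equation*}
comparing this with its $m=1$ instance gives the proportionality $S_7^{(m)}(p^{t+1})\equiv(-1)^{m-1}\binom{5}{m-1}\,S_7^{(1)}(p^{t+1})\pmod{p^{t+1}}$ for all $t\ge 1$.

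To prove \eqref{equ:kInductionStep} for $n=7$ and $r\ge 2$, I substitute the proportionality (legitimate since $r\ge 2$) into the displayed identity with $t=r$, obtaining
\begin{equation*}
S_7^{(1)}(p^{r+1})\equiv p\,S_7^{(1)}(p^{r})\sum_{a=1}^{6}\gam_7(a)\,(-1)^{a-1}\binom{5}{a-1}\pmod{p^{r+1}}.
\end{equation*}
Because $\gam_7(a)\,(-1)^{a-1}\binom{5}{a-1}=\binom{5}{a-1}/\bigl(a\binom{6}{a}\bigr)=\tfrac16$ for each $a=1,\dots,6$, the sum is $1$, so $S_7^{(1)}(p^{r+1})\equiv p\,S_7^{(1)}(p^{r})\pmod{p^{r+1}}$ for all $r\ge 2$.

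For the base case, take $m=1$ and $t=1$ in the displayed identity and pair terms using Lemma~\ref{lem:recurrence}(i), which gives $S_7^{(7-a)}(p)\equiv -S_7^{(a)}(p)\pmod p$ since $n=7$ is odd:
\begin{equation*}
S_7^{(1)}(p^{2})\equiv p\sum_{a=1}^{3}\bigl(\gam_7(a)-\gam_7(7-a)\bigr)S_7^{(a)}(p)\pmod{p^{2}},
\end{equation*}
where $\gam_7(1)-\gam_7(6)=\tfrac13$, $\gam_7(2)-\gam_7(5)=-\tfrac1{15}$, $\gam_7(3)-\gam_7(4)=\tfrac1{30}$. Now $S_7^{(1)}(p)\equiv -6!\,B_{p-7}$ by Lemma~\ref{lem:homo1}, $S_7^{(2)}(p)\equiv 3\cdot 6!\,B_{p-7}$ by Corollary~\ref{cor:S_n(2)(p)forOddn}, and $S_7^{(3)}(p)\equiv -5\cdot 6!\,B_{p-7}$ by Corollary~\ref{cor:S_n(3)(p)forOddn} (its triple Bernoulli sum being empty, since $a+b+c=\tfrac{7-3}{2}=2$ has no solution with $a,b,c\ge 1$). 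Substituting, the right-hand combination collapses to $6!\bigl(-\tfrac13-\tfrac15-\tfrac16\bigr)B_{p-7}=-\tfrac{7!}{10}B_{p-7}$, hence $S_7^{(1)}(p^{2})\equiv -\tfrac{7!}{10}B_{p-7}\,p\pmod{p^{2}}$.

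The proposition now follows by induction on $r$: the case $r=1$ is the line just proved, and if $S_7^{(1)}(p^{r+1})\equiv -\tfrac{7!}{10}B_{p-7}\,p^{r}\pmod{p^{r+1}}$ for some $r\ge 1$, then multiplying by $p$ and applying \eqref{equ:kInductionStep} (with $r+1\ge 2$ in the role of $r$) gives $S_7^{(1)}(p^{r+2})\equiv p\,S_7^{(1)}(p^{r+1})\equiv -\tfrac{7!}{10}B_{p-7}\,p^{r+1}\pmod{p^{r+2}}$. I expect the base case to be the delicate part of the argument: one must keep track of $p$-adic valuations to justify reducing $C^{(m)}_{a,p}(7)$ modulo $p^{2}$ in Lemma~\ref{lem:recurrence}(ii), and everything rests on the earlier, genuinely nontrivial, mod $p$ evaluations of $S_7^{(2)}(p)$ and $S_7^{(3)}(p)$; the step where the prediction of \eqref{equ:CongruenceDepthd} falls out is the elementary identity $-\tfrac13-\tfrac15-\tfrac16=-\tfrac{7}{10}$.
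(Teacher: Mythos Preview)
Your argument is correct and follows essentially the same route as the paper's own proof: you establish the divisibility $p^{t-1}\mid S_7^{(a)}(p^{t})$ by induction, reduce $C^{(m)}_{a,p}(7)$ to its residue modulo $p^2$ via Lemma~\ref{lem:Cn(m)a}, derive the proportionality $S_7^{(m)}(p^{t+1})\equiv(-1)^{m-1}\binom{5}{m-1}S_7^{(1)}(p^{t+1})$, prove \eqref{equ:kInductionStep} from the identity $\sum_{a=1}^{6}\gamma_7(a)(-1)^{a-1}\binom{5}{a-1}=1$, and handle the base case $r=1$ by pairing via Lemma~\ref{lem:recurrence}(i) and plugging in the mod~$p$ values from Lemma~\ref{lem:homo1} and Corollaries~\ref{cor:S_n(2)(p)forOddn} and~\ref{cor:S_n(3)(p)forOddn}. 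Your write-up is in fact slightly cleaner than the paper's in two places: you make explicit why only the residue of $C^{(m)}_{a,p}(7)$ modulo $p^2$ is needed, and you give the closed-form simplification $\binom{5}{a-1}\big/\bigl(a\binom{6}{a}\bigr)=\tfrac16$ rather than leaving the reader to verify a factorial identity.
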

\begin{proof}
By Lemma~\ref{lem:recurrence}, for all $r\ge 1$, we have
\begin{align*}
S_n^{(m)}(p^{r+1})\equiv \sum_{a=1}^{n-1}
   \big( (-1)^{m-1}{n-2 \choose m-1}\gam_n(a) p+O(p^2) \big) S_n^{(a)}(p^r) \pmod{p^{r+1}}.
\end{align*}
Here the $O(p^2)$ means a quantity which remains a $p$-adic integer
after dividing by the $p^2$.
By induction on $r$ it is not hard to see that for all $m=1,\dots,n-1$, we have
\begin{equation*}
S_n^{(m)}(p^{r+1}) \equiv 0 \pmod{p^r}, \quad \text{ for all }r\ge 1.
\end{equation*}
Thus for all $m=1,\dots,n-1$, by Lemmas~\ref{lem:Cn(m)a} and \ref{lem:recurrence}, we have
\begin{alignat*}{3}
S_n^{(m)}(p^{r+1})  &\, \equiv \sum_{a=1}^{n-1} (-1)^{m-1}
            \binom{n-2}{m-1} \gam_n(a) p  S_n^{(a)}(p^r) & \pmod{p^{r+1}}\, \\
    &\, \equiv   (-1)^{m-1}\binom{n-2}{m-1} S_n^{(1)}(p^{r+1})   & \pmod{p^{r+1}}.
\end{alignat*}
Thus by Lemmas~\ref{lem:Cn(m)a} and \ref{lem:recurrence}, for all $r\ge 2$
\begin{alignat*}{3}
S_n^{(1)}(p^{r+1}) \equiv&\ \sum_{m=1}^{n-1} C_{a,p}^{m}(n) S_n^{(m)}(p^{r})   & \pmod{p^{r+1}} \, \\
    \equiv &\ \sum_{m=1}^{n-1}
     (-1)^{m-1}\binom{n-2}{m-1}p\gam_{n}(m)  S_n^{(1)}(p^r)   & \pmod{p^{r+1}}\, \\
    \equiv &\ \sum_{m=1}^{n-1}
    \frac{(n-m-1)!(m-1)! p}{(n-1)!}\binom{n-2}{m-1}  S_n^{(1)}(p^r)   & \pmod{p^{r+1}}\,  \\
   \equiv &\  p   S_n^{(1)}(p^r)                         & \pmod{p^{r+1}},
\end{alignat*}
which proves \eqref{equ:kInductionStep}.
Finally, by applying Lemma~\ref{lem:recurrence} when $n=7$, we get
\begin{alignat*}{3}
S_7^{(1)}(p^2) \equiv &\, \frac{p}{3} S_7^{(1)}(p) - \frac{p}{15}S_7^{(2)}(p)
  +\frac{p}{30} S_7^{(3)}(p^r) & \pmod{p^2}\\
 \equiv &\,  \left(-\frac{p}{3}-\frac{3p}{15}-\frac{5p}{30}\right) 6! B_{p-7}
 \equiv -\frac{7!}{10}  B_{p-7} p  & \pmod{p^2}
\end{alignat*}
by Lemma~\ref{lem:homo1}, Corollary~\ref{cor:S_n(2)(p)forOddn} and
Corollary~\ref{cor:S_n(3)(p)forOddn}.
\end{proof}

We are now ready to prove Theorem~\ref{thm:main}.

Let $n=mp^{r}$, where $p$ does not divide $m$.
For any 7-tuples $(l_1,\cdots ,l_7)$ of integers satisfying $l_1+\cdots +l_7=n$, $l_{i} \in \calP_{p}$, $1 \le i \le 7$, we rewrite them as
\begin{equation*}
l_i=x_ip^r+y_i, \quad x_i \ge 0, \quad 1\le y_i<p^r, \quad y_i\in \calP_p, \quad 1 \le i \le 7.
\end{equation*}
Since
\begin{equation*}
\Big(\sum_{i=1}^7 x_i \Big)p^r+\sum_{i=1}^{7}{y_i}=mp^r,
\end{equation*}
we know there exists $1\le a \le 6$ such that
\begin{equation*}
\left\{ \begin{array}{ll}
 x_1+\cdots +x_7=m-a, \\
 y_1+\cdots +y_7=ap^{r}. \\
\end{array} \right.
\end{equation*}
For $1 \le a \le 6$, the equation $x_1+\cdots + x_7=m-a$ has $\binom {m+6-a}{6}$
nonnegative integer solutions. Hence
\begin{align}\label{thm2rec}
\sum_{\substack{l_1+\cdots +l_7=mp^r \\
 l_1,\cdots ,l_7\in \calP_p}}  \frac{1}{l_1l_2\cdots l_7}
& =\sum_{a=1}^6 \ \sum_{
 x_1+\cdots +x_7=m-a}   \   \sum_{\substack{
 y_1+\cdots +y_7=ap^r \\
 y_i\in \calP_p,y_i<p^r}}  \frac{1}{(x_1p^r+y_1)\cdots (x_7p^r+y_7)} \nonumber \\
&\equiv \sum_{a=1}^{6} \binom{m+6-a}{6}S_7^{(a)}(p^r)  \pmod{p^{r}}.
\end{align}

(i) If $r=1$, then since $S_7^{(1)}(p)\equiv -6!B_{p-7} \pmod{p}$. We also have $S_7^{(2)}(p) \equiv 3\cdot 6!B_{p-7} \pmod{p}$, $S_7^{(3)}(p)\equiv -5\cdot 6!B_{p-5}\pmod{p}$ and $S_7^{(a)}(p) \equiv -S_7^{(7-a)}(p) \pmod{p}$ for $4 \le a \le 6$. Hence from \eqref{thm2rec} we have
\begin{equation*}
\sum_{\substack{l_1+\cdots +l_7=n \\
 l_1,\cdots ,l_7\in \calP_p}}  \frac{1}{l_1l_2\cdots l_7}
 \equiv \frac{1}{6!}\Big(504m+210m^3+6m^5 \Big) S_7^{(1)}(p) \pmod{p}.
\end{equation*}
Since $S_7^{(1)}(p)\equiv -6!B_{p-7} \pmod{p}$ we complete the proof of (i).

(ii) If $r \ge 2$, then we have $S_7^{(2)}(p^r)\equiv -5S_7^{(1)}(p^{r}) \pmod{p^r}$ and $S_7^{(3)}(p^r)\equiv 10S_7^{(1)}(p^{r}) \pmod{p^r}$. Meanwhile, we have $S_7^{(a)}(p) \equiv -S_7^{(7-a)}(p)\pmod{p^r}$ for $4 \le a \le 6$.
Hence from (\ref{thm2rec}) we obtain
\begin{equation*}
\sum_{\substack{l_1+\cdots +l_7=n \\
 l_1,\cdots ,l_7\in \calP_p}}  \frac{1}{l_1l_2\cdots l_7}
 \equiv  \sum_{a=0}^5 (-1)^a \binom{5}{a}\binom{m+5-a}{6} S_7^{(1)}(p^{r})
 \equiv m S_7^{(1)}(p^{r}) \pmod{p^{r}}.
\end{equation*}
Since $S_7^{(1)}(p^{r}) \equiv -\frac{7!}{10}p^{r-1}B_{p-7} \pmod{p^r}$
by Proposition~\ref{prop:specialCase}, we complete the proof of (ii).

\section{Concluding remarks}
Using similar ideas from \cite{Zhao2014} we find that
it is unlikely to further generalize our main result to congruence \eqref{equ:CongruenceDepthd}
for $r\ge 2$, odd integer $d\ge 9$, and $q_d\in\Q$ depending only on $d$.
By using PSLQ algorithm we find that both the numerator and the denominator
of $q_9$ would have at least 60 digits if
the congruence \eqref{equ:CongruenceDepthd} holds for every prime $p\ge 11$. However, when $r=1$ we have obtained
a few general congruences in
Lemma \ref{lem:homo1}, Lemma~\ref{lem:2pGeneral} and Corollary~\ref{cor:S_n(2)(p)forOddn}, which can be
rephrased as follows.
Let $m=1,2$ and $d$ be any odd integer greater than $2$.
Then for any prime $p>d$, we have
\begin{equation}\label{equ:CongruenceDepth1S}
S_d^{(m)}(p) \equiv c_{d,m} \cdot (d-1)! B_{p-d} \pmod{p},
\end{equation}
where $c_{d,1}=-1$ and $c_{d,2}=(d-1)/2$, and
\begin{equation}\label{equ:CongruenceDepth1}
R_d^{(m)}(p) \equiv c'_{d,m} \cdot (d-1)! B_{p-d} \pmod{p},
\end{equation}
where $c'_{d,1}=-1$ and $c'_{d,2}=-(d+1)/2$.
Unfortunately, Lemma~\ref{lem:3pGeneral} and Corollary~\ref{cor:S_n(3)(p)forOddn} imply that these do not generalize to $m\ge 3$.
Computation with PSLQ algorithm suggests that if
\eqref{equ:CongruenceDepth1S} and \eqref{equ:CongruenceDepth1}
hold for $d=9,11,13,15$, $m=3,4$ then both the numerators and the denominators
of $c_{d,m}$ and $c'_{d,m}$ would have at least 60 digits. In fact,
numerical evidence suggests the following conjecture.
\begin{conj}\label{conj:wt8910}
For any prime $p\ge 11$, we have
\begin{align*}
R_8^{(m)}(p)\equiv&\, \frac{112}{5}m(m^2+16)(m^2-1) B_{p-3}B_{p-5}  \pmod{p},\\
R_9^{(m)}(p) \equiv&\, -\frac{8!}{18}\binom{m+2}{5} B_{p-3}^3
-8m(m^6+126 m^4+1869 m^2+3044) B_{p-9} \pmod{p},\\
R_{10}^{(m)}(p) \equiv&\, -\frac{24}{35}m(m^4+71m^2+540)(m^2-1)
\left(50 B_{p-3} B_{p-7}+21 B_{p-5}^2\right)\pmod{p}.
\end{align*}
\end{conj}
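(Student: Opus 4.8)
The plan is to extend the computational engine behind Theorem~\ref{thm:main} to weights $d=8,9,10$. The first step is level-lowering: writing each index as $l_i=x_ip+y_i$ with $x_i\ge 0$ and $1\le y_i<p$, exactly as in the derivation of \eqref{thm2rec} with $r=1$, one gets for every $m$
$$R_d^{(m)}(p)\equiv\sum_{a=1}^{d-1}\binom{m+d-1-a}{d-1}S_d^{(a)}(p)\pmod p .$$
Lemma~\ref{lem:recurrence}(i) gives the reflection $S_d^{(a)}(p)\equiv(-1)^dS_d^{(d-a)}(p)\pmod p$, and specializing the displayed congruence to $m=a$ (where the $a$-th binomial coefficient is $1$) expresses $S_d^{(a)}(p)$ in terms of $R_d^{(a)}(p)$ and the $S_d^{(a')}(p)$ with $a'<a$. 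Hence, by induction on $a$, the whole problem reduces to evaluating $R_d^{(a)}(p)\bmod p$ for $1\le a\le\lfloor d/2\rfloor$ (so $a\le4$ for $d=8,9$ and $a\le5$ for $d=10$) and then reading off the resulting polynomial in $m$.

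For $a=1$ this is Lemma~\ref{lem:homo1}. For $a\ge2$ I would follow the proof of Lemma~\ref{lem:3pGeneral} line by line: with $\nu=d-1$ and $u_i=l_1+\cdots+l_i$,
$$R_d^{(a)}(p)=\frac{d!}{ap}\sum_{\substack{1\le u_1<\cdots<u_\nu<ap\\ u_1,\;u_2-u_1,\;\dots,\;u_\nu-u_{\nu-1},\;u_\nu\in\calP_p}}\frac1{u_1\cdots u_\nu},$$
and then run inclusion--exclusion over the set of indices $i$ for which $u_i$ lies on one of the multiples $p,2p,\dots,(a-1)p$. Each stratum factors as a product of ordered harmonic sums over shifted ranges $(kp,(k+j)p)$; expanding the shifts into powers of $p$ to the precision needed turns every stratum, via Corollary~\ref{cor:homo} and the generalization of Lemma~\ref{lem:Ub} to arbitrary $b$, into Bernoulli data. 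For odd $d$ the survivors are $B_{p-d}$ and the triple products $B_{p-2i-1}B_{p-2j-1}B_{p-2k-1}$ with $i+j+k=(d-3)/2$ — for $d=9$ only $B_{p-3}^3$ — while for even $d$ the single-index term vanishes ($p-d$ is odd, so $B_{p-d}=0$) and one is left with double products $B_{p-i}B_{p-j}$, $i+j=d$, $i,j$ odd, that is $B_{p-3}B_{p-5}$ for $d=8$ and $B_{p-3}B_{p-7}$ and $B_{p-5}^{2}$ for $d=10$: precisely the shapes in Conjecture~\ref{conj:wt8910}.

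Substituting the closed forms for $S_d^{(a)}(p)$, $1\le a\le\lfloor d/2\rfloor$, and their reflections into the first displayed congruence and simplifying then produces the three polynomial identities in $m$. The specialization $m=1$ must recover Lemma~\ref{lem:homo1} ($-8!\,B_{p-9}$ for $d=9$, and $0$ for $d=8,10$), a consistency check that also explains the factor $(m-1)$ in the $d=8$ and $d=10$ formulas. I expect this programme, carried through, to promote the conjecture to a theorem.

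The real difficulty is the combinatorial blow-up in the inclusion--exclusion above. Already for $d=7$, Lemma~\ref{lem:3pGeneral} requires the strata with one partial sum on a multiple of $p$ ($D_j,T_j$), the strata with two such partial sums ($T_{j,k},W_j$), and a nontrivial cancellation of all ``off-diagonal'' contributions. For $R_{10}^{(5)}(p)$ the available multiples are $p,2p,3p,4p$, so up to four of the partial sums $u_i$ may sit on them at once; the inclusion--exclusion therefore has depth four, each stratum being a product of as many as four shifted harmonic sums that must be carried to modulus $p^2$ — and to $p^3$ in the odd-weight strata, where by Lemma~\ref{lem:Ub} the relevant un-ordered sums only begin at order $p^2$ — so that no Bernoulli contribution is dropped. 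Organizing these strata and verifying that everything except the handful of Bernoulli numbers and products listed in Conjecture~\ref{conj:wt8910} cancels is where essentially all the labour lies; the level-lowering step and the final assembly are routine transcriptions of the $d=7$ argument. (The obstruction recorded in the concluding remarks concerns the nonexistence of a single formula in $d$ valid for all odd $d\ge9$, not these three fixed weights.)
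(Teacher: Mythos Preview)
The statement you are attempting to prove is labelled \emph{Conjecture}~\ref{conj:wt8910} in the paper, and the paper gives no proof of it. It is presented in the concluding remarks as a formula discovered by numerical experimentation with the PSLQ algorithm, and the authors explicitly leave it open. There is therefore no ``paper's own proof'' to compare your proposal against.

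What you have written is not a proof either, and you say as much: it is an outline of how the machinery behind Lemmas~\ref{lem:2pGeneral} and~\ref{lem:3pGeneral} might be pushed to $a\le 4$ (for $d=8,9$) and $a\le 5$ (for $d=10$), together with a heuristic for which Bernoulli monomials ought to survive. The level-lowering reduction to the $S_d^{(a)}(p)$ and the reflection from Lemma~\ref{lem:recurrence}(i) are indeed routine transcriptions of the $d=7$ argument, and your identification of the shapes $B_{p-3}B_{p-5}$, $\{B_{p-9},\,B_{p-3}^3\}$, and $\{B_{p-3}B_{p-7},\,B_{p-5}^2\}$ matches the conjecture. But the substance of the claim --- the exact rational coefficients --- lives entirely inside the inclusion--exclusion you describe but do not perform, and there is no a priori guarantee that the deeper strata (with three or four $u_i$ on multiples of $p$) contribute only to the monomials you list rather than introducing, say, additional pairs or cancellations that shift the coefficients. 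Until that bookkeeping is actually carried out, this remains a plausible plan of attack on an open conjecture, not a proof.
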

This conjecture is consistent with the general philosophy we have observed
for the finite multiple zeta values (FMZVs).
See, for example, \cite{Zhao2011c,Zhao2015a} for the definition of FMZVs and
the relevant results.
Note that according to the dimension conjecture of FMZVs
discovered by Zagier and independently by the last author
(see \cite{Zhao2015a}) the weight 8 (resp. weight 10)
piece of FMZVs has conjectural dimension 2 (resp. 3).
Theorem~\ref{thm:main} (i), Conjecture~\ref{conj:wt8910}
and all the previous works in lower weights
imply that $R_d^{(m)}(p)$ ($d\le 10$ and $m\ge 2$) should lie in the proper
subalgebra generated by the so-called ${\mathcal A}_1$-Bernoulli numbers
defined in \cite{Zhao2015a}. According to the analogy between FMZVs
and MZVs, this subalgebra is the FMZV analog
of the MZV subalgebra generated by the Riemann zeta values. It would be
interesting to see if this phenomenon holds in every weight.

\bigskip
\noindent
{\bf Acknowledgements.} JZ is partially supported by the NSF grant DMS~1162116.
Part of this work was done while he was visiting the Max Planck Institute for
Mathematics, IHES and ICMAT at Madrid, Spain, whose supports are gratefully acknowledged.
The authors also thank the anonymous referee for a number of valuable suggestions
which improved the paper greatly.

\end{document}